% SIAM Article Template
\documentclass[review,onefignum,onetabnum]{siamart171218}
%\usepackage{refcheck}

% SIAM Shared Information Template
% This is information that is shared between the main document and any
% supplement. If no supplement is required, then this information can
% be included directly in the main document.

% Packages and macros go here
\usepackage{bm}
\usepackage{mathtools}
\usepackage{verbatim}
\usepackage{comment}
\usepackage{multirow} 
\usepackage{xspace,soul,color}
\usepackage{xcolor}
\soulregister{\cite}7 % 注册\cite命令
\soulregister{\citep}7 % 注册\citep命令
\soulregister{\citet}7 % 注册\citet命令
\soulregister{\ref}7 % 注册\ref命令
\nolinenumbers

\usepackage{mathrsfs,amsmath,amssymb,bm}
\usepackage{lipsum}
\usepackage{amsfonts}
\usepackage{graphicx}
\usepackage{epstopdf}
\usepackage{makecell, rotating}
\usepackage{multirow}
\usepackage{algorithmic}
\usepackage{lipsum}
\usepackage{amsfonts}
\usepackage{graphicx}
\usepackage{epstopdf}
\usepackage{algorithmic}
\usepackage{tabularx}
\usepackage{hyperref}
\usepackage{cleveref}
\ifpdf
  \DeclareGraphicsExtensions{.eps,.pdf,.png,.jpg}
\else
  \DeclareGraphicsExtensions{.eps}
\fi

% Add a serial/Oxford comma by default.

% Add a serial/Oxford comma by default.

%\newtheorem{proof}{Proof}[section]
\newtheorem{thm}{Theorem}[section]

\newtheorem{remark}[thm]{Remark}

\newcommand*{\avint}{\mathop{\ooalign{$\int$\cr$-$}}}
\newcommand{\ds}{\displaystyle}

% Used for creating new theorem and remark environments

\newsiamremark{hypothesis}{Hypothesis}
\crefname{hypothesis}{Hypothesis}{Hypotheses}
\newsiamthm{claim}{Claim}

% Sets running headers as well as PDF title and authors
\headers{Nullspace-preserving high-index saddle dynamics}{Kai Jiang, Lei Zhang, Xiangcheng Zheng, Tiejun Zhou}

% Title. If the supplement option is on, then "Supplementary Material"
% is automatically inserted before the title.
\title{Nullspace-preserving high-index saddle dynamics method for degenerate multiple solution problems\thanks{Submitted to the journal’s Methods and Algorithms for Scientific Computing section.
\funding{KJ and TJZ were supported by the NSFC grant (12171412), the Science and Technology Innovation Program of Hunan Province (2024RC1052). LZ was supported by the NSFC grant (12225102, T2321001, 12288101). XCZ was partially supported by the NSFC grant (12301555), the Natural Science
Foundation of Shandong Province (ZR2025QB01), the National Key R\&D Program of China (2023YFA1008903), and by the Taishan Scholars Program of Shandong Province (tsqn202306083). We acknowledge the High-Performance Computing Platform of Xiangtan University for the partial support of this work.}}}

% Authors: full names plus addresses.
\author{Kai Jiang\thanks{Hunan Key Laboratory for Computation and Simulation in Science and Engineering,
        Key Laboratory of Intelligent Computing and Information Processing of Ministry
        of Education, School of Mathematics and Computational Science, Xiangtan University, Xiangtan, Hunan,
        China, 411105.
        (Corresponding author. Kai Jiang, \email{kaijiang@xtu.edu.cn}).}
         \and Lei Zhang\thanks{Beijing International Center for Mathematical Research, Center for Machine Learning Research, Center for Quantitative Biology, Peking University, Beijing, China, 100871.} \and Xiangcheng Zheng\thanks{School of Mathematics, Shandong University, Jinan, Shandong, China, 250100.} \and Tiejun Zhou\footnotemark[2]}

\usepackage{amsopn}
\DeclareMathOperator{\diag}{diag}
\usepackage{amsfonts}

%% Added on Overleaf: enabling xr
\makeatletter
\newcommand*{\addFileDependency}[1]{% argument=file name and extension
  \typeout{(#1)}% latexmk will find this if $recorder=0 (however, in that case, it will ignore #1 if it is a .aux or .pdf file etc and it exists! if it doesn't exist, it will appear in the list of dependents regardless)
  \@addtofilelist{#1}% if you want it to appear in \listfiles, not really necessary and latexmk doesn't use this
  \IfFileExists{#1}{}{\typeout{No file #1.}}% latexmk will find this message if #1 doesn't exist (yet)
}
\makeatother

\definecolor{myred}{rgb}{1,0.8,0.8}

\definecolor{mycyan}{rgb}{0.5,0.92,1.0}

\definecolor{mycyan}{rgb}{0.5,0.92,1.0}
\definecolor{mygreen}{rgb}{0.56,0.93,0.56}

\definecolor{myhl}{rgb}{1.0,0.98,0.56}
\sethlcolor{myhl}

%%% END HELPER CODE
%%% Local Variables: 
%%% mode: latex
%%% TeX-master: "ex_article"
%%% End: 

% Optional PDF information
\hypersetup{
	pdftitle={Nullspace-preserving high-index saddle dynamics method for degenerate multiple solution problems},
	pdfauthor={Kai Jiang, Lei Zhang, Xiangcheng Zheng, Tiejun Zhou}
}

\begin{document}
	\maketitle
	% REQUIRED
	\begin{abstract}
	We propose the nullspace-preserving high-index saddle dynamics (NPHiSD) method for degenerating multiple solution systems in constrained and unconstrained settings. The NPHiSD efficiently locates high-index saddle points and provides parent states for downward searches of lower-index saddles, thereby constructing the solution landscape systematically. The NPHiSD method searches along multiple efficient ascent directions by excluding the nullspace, which is the key for upward searches in degenerate problems. To reduce the cost of frequent nullspace updates, the search is divided into segments, within which the ascent directions remain orthogonal to the nullspace of the initial state of each segment. A sufficient and necessary condition for characterizing the segment that admits efficient ascent directions is proved. Extensive numerical experiments for typical problems such as Lifshitz-Petrich, Gross-Pitaevskii, and Lennard-Jones models are performed to show the universality and effectiveness of the NPHiSD method.
	\end{abstract}
	
	% REQUIRED
	\begin{keywords}
		Degenerate problem, Nullspace-preserving, High-index saddle dynamics, Saddle points, Solution landscape.
	\end{keywords}
	
	% REQUIRED
	\begin{AMS}
		37M05, 49K35, 37N30.
	\end{AMS}
	
\section{Introduction}
Finding stationary points of complex systems is crucial for understanding their underlying mechanisms. The solution landscape, a comprehensive pathway map consisting of stationary points and their connections\,\cite{yin2020construction}, provides insights into the connections between multiple solutions\,\cite{wang2018two}, and has found wide applications, including locating liquid crystal defects\,\cite{shi2024multistability,wang2021modeling}, revealing the excited states and excitation mechanisms of rotational Bose-Einstein condensate\,\cite{yin2024revealing}, and studying nucleation of quasicrystals\,\cite{yin2021transition}. Due to the inherently unstable nature of saddle points, their numerical computation presents significantly greater challenges than finding minima through standard optimization methods, which has motivated extensive research efforts in this field.

For locating index-1 saddle points, numerous computational approaches have been developed and can be categorized into two main classes, that is, path-finding methods\,\cite{e2002string,e2004minimum,jonsson1998nudged} and surface-walking methods\,\cite{cui2024spring, gao2015iterative, gao2016iterative, henkelman1999dimer, yin2019high}. For computing high-index saddle points, several important approaches have been proposed. The minimax method\,\cite{chen2022improved,chen2025nehari,xie2022solving} applied the local minimax theorem to compute index-$k$ saddle points. 
The biased gradient squared descent method\,\cite{duncan2014biased} finds high-index saddle points by transforming all critical points of the original potential energy into the minima of a gradient squared landscape. In\,\cite{e2011gentlest}, the gentlest ascent dynamics method was also extended to compute index-2 saddle points, and a generalized gentlest ascent dynamics algorithm was developed in\,\cite{bofill2015some,quapp2014locating} to locate high-index saddle points on the potential energy surfaces. In\,\cite{yin2019high}, a high-index saddle dynamics (HiSD) method was recently proposed to find the high-index saddle points and construct the solution landscape.
 
Despite these significant computational advances, most existing approaches are designed for non-degenerate problems. For degenerate systems, such as crystal\,\cite{brazovskii1975phase, shi1996theory}, quasicrystals\,\cite{levine1984quasicrystals, tsai2000stable, wang2011origin}, crystal boundary\,\cite{jiang2022tilt, sutton1995interfaces}, Bose-Einstein condensate\,\cite{bao2013mathematical, bao2004computing}, cluster\,\cite{miller1997isomerization, schwerdtfeger2024years, tsai1993use} and deep neural networks \cite{daneshmand2018escaping, dauphin2014identifying}, the stationary points are often degenerate, meaning the Hessian at these points has a nullspace. This degeneracy can hinder the escape from the basin or finding higher-index saddle points. Furthermore, solution landscapes are typically constructed through successive downward searches from the highest-index saddle point to lower-index ones. When the highest-index saddle point is not given \textit{a priori}, it may need to be computed through an upward search from a lower-index saddle point. Therefore, developing efficient upward search algorithms for degenerate problems is critical for constructing their solution landscapes. Recently, a nullspace-preserving saddle search method was proposed to locate transition states from the degenerate basin by searching in the complement of the nullspace\,\cite{cui2025efficient}, which motivates the extension to high-index cases. Different from the index-1 saddle point, which is adjacent to the local minimum, searching for high-index saddle points may cross more complex energy surfaces and pass through multiple lower-index saddle points. In particular, overcoming the nullspace obstacles and designing efficient high-index algorithms for degenerate problems remain significant challenges.

In this work, we develop the nullspace-preserving high-index saddle dynamics (NPHiSD) method to study degenerate saddle problems for both constrained and unconstrained systems, which searches along multiple efficient ascent directions by excluding the nullspace. To avoid the computational cost of computing the nullspace at each state, the search is performed on several segments where the nullspace is fixed on each segment. Concerning the discrepancy between the evolutionary and fixed nullspaces, a sufficient and necessary condition for characterizing the segments that admit efficient upward directions is proved, which provides theoretical support for the algorithm. We then use the NPHiSD method to search for high-index saddle points and construct the solution landscape in three representative systems, i.e., (i) nucleation phenomenon of crystals on the Lifshitz-Petrich model; (ii) excited states in Bose-Einstein condensates on the Gross-Pitaevskii model; (iii) cluster configurations on the Lennard-Jones model. Numerical results confirm the efficiency, generality, and robustness of the proposed method.

The rest of this paper is organized as follows: Section 2 introduces notations and preliminary concepts. Section 3 presents the NPHiSD method for both constrained and unconstrained problems. Section 4 analyzes the feasibility of the segment-by-segment algorithm. Section 5 provides numerical discretization and analysis for NPHiSD. Section 6 demonstrates applications of NPHiSD in various examples, and some concluding remarks are addressed in the last section.

\section{Notations and preliminaries}
In this section, we introduce the notation and preliminary results used in the analysis.

Let $E(\phi)$ with $\phi\in \mathbb R^M$ ($0<M\in\mathbb N$) be a twice differentiable energy functional with the negative gradient $F(\phi):=-\nabla E(\phi)$ and the Hessian $H(\phi):=\nabla^2 E(\phi)$. The critical point of $E$ satisfies $\nabla E(\phi) = 0$. A commonly-used approach to distinguish different critical points is by Morse index\,\cite{stein1963morse}. The Morse index of a non-degenerate critical point $\phi$ refers to the number of negative eigenvalues of $H(\phi)$. For degenerate problems, we can extend the definition of the Morse index to depict critical points. Specifically, for a degenerate or non-degenerate critical point, the \textit{generalized Morse index} is the number of negative eigenvalues of its Hessian, and a critical point is called an index-$k$ \textit{generalized saddle point} (GSP) if its generalized Morse index is $k\, (k > 0)$. The index-1 GSP is also known as the transition state. The case $k=0$ corresponds to the \textit{generalized local minimum} (GLM). A region around a generalized critical point $\phi^{*}\in\mathbb{R}^M$ is called a \textit{generalized quadratic region} (GQR), if the Hessian $H(\phi)$ has the same number of negative eigenvalues as $H(\phi^{*})$ for any $\phi \in\mathbb{R}^M$ in this region.

Given $\phi\in\mathbb{R}^M$, we define the unstable, stable subspaces and nullspace of $H(\phi)$ as
\begin{equation*}
	\mathcal{W}^u(\phi) \!=\! {\rm span}\{v^u_1,\cdots,v^u_{l_u}\}, \,
	\mathcal{W}^s(\phi) =  {\rm span}\{v^s_{1},\cdots,v^s_{l_s}\}, \,
	\mathcal{W}^n(\phi) = {\rm span}\{v^n_{1},\cdots,v^n_{l_n}\}, 
\end{equation*}
where $\{v^u_1,\cdots,v^u_{l_u}\}$, $\{v^s_{1},\cdots,v^s_{l_s}\}$ and $\{v^n_{1},\cdots,v^n_{l_n}\}\subset \mathbb{R}^M$ are eigenvectors of $H(\phi)$
corresponding to negative, positive, and zero eigenvalues, respectively, and $l_u + l_s + l_n = M$.
According to the primary decomposition theorem\,\cite{hirsch1974differential}, $\mathbb{R}^M$ can be decomposed as
$\mathbb{R}^M = \mathcal{W}^u(\phi) \oplus \mathcal{W}^s(\phi) \oplus \mathcal{W}^n(\phi)$
where the dimensions $l_u$, $l_s$ and $l_n$ of subspaces $\mathcal{W}^u(\phi)$, $\mathcal{W}^s(\phi)$ and $\mathcal{W}^n(\phi)$ depend on the location of $\phi$ on the energy surface.
In other words, if $\phi$ is in the GQR of the GLM, $l_u = 0$. If $\phi$ in the GQR of index-$k$ GSP, $l_u = k$. If $\phi$ is in the GQR of the generalized local maximum, $l_s = 0$. To unify minima and saddle points, we define the index-$p$ stationary points. When $p = 0$, it represents the GLM, and when $p>0$, it represents the index-$p$ GSP.

Define the norm $\|\phi\|_2 = \sqrt{\sum_{i=1}^M \phi_i^2}$ and the inner product $\langle \phi,\psi \rangle=\phi^\top \psi$ for $\phi=(\phi_1, \phi_2, \cdots,\phi_M ),\, \psi =(\psi_1, \psi_2, \cdots, \psi_M)\in \mathbb{R}^M$. To measure the difference between two subspaces, we define the principal angles\,\cite{bjorck1973numerical,miao1992principal}.
Let $\mathcal{W}$ and $\widehat{\mathcal{W}}$ be subspaces in $\mathbb{R}^M$ with {\rm dim} $\mathcal{W} = d \leq$ {\rm dim} $\widehat{\mathcal{W}} = m$. Then the principal angles $0 \leq \theta_1 \leq \theta_2 \leq \cdots \leq \theta_d \leq \pi/2$ between $\mathcal{W}$ and $\widehat{\mathcal{W}}$
are defined by
\begin{align}
	\cos \theta_i := \frac{u_i^\top v_i}{\|u_i\|_{2}\|v_i\|_{2}}
	= \max \left\{ \frac{u^\top v }{\|u\|_{2}\|v\|_{2}} :~\begin{aligned}
			u\in \mathcal{W}, &~ u \bot u_k,\\
			v \in \widehat{\mathcal{W}}, &~ v \bot v_k,
	\end{aligned}~~~ k = 1,\cdots,i-1\right\},\notag
\end{align}
where $(u_i, v_i) \in \mathcal{W} \times \widehat{\mathcal{W}}$ for $ i =  1,\cdots, d$
are corresponding $d$ pairs of principal vectors. We also denote $\sin \Theta(\mathcal{W}, \widehat{\mathcal{W}})$ as
$
\sin \Theta(\mathcal{W}, \widehat{\mathcal{W}}) = \operatorname{diag}(\sin \theta_1, \sin \theta_2, \cdots, \sin \theta_d),
$
which is a $d \times d$ diagonal matrix\,\cite{bjorck1973numerical}. $\sin \Theta(\mathcal{W}, \widehat{\mathcal{W}})$ can measure the difference between the subspaces $\mathcal{W}$ and $\widehat{\mathcal{W}}$.

\section{Formulation of NPHiSD}
In this section, we propose the NPHiSD for locating high-index saddle points in degenerate problems. The motivations are as follows: Finding high-index saddle points is important for applications, such as providing the high-index parent state to construct the solution landscape\,\cite{yin2020construction}. When we intend to find an index-$k$ GSP from a GLM on the potential surface $E(\phi)$, one can ascend along $k$ eigenvectors corresponding to the smallest $k$ nonzero eigenvalues of the Hessian. However, in degenerate problems where the Hessian has a nullspace, these directions may involve components from the nullspace, leading to the failure of the search. Inspired by the nullspace-preserving saddle dynamics method for locating the index-1 GSPs\,\cite{cui2025efficient}, we propose the NPHiSD method for locating index-$k$ GSPs of degenerate problems, which excludes the nullspace when designing ascent directions. In subsequent contents, we consider both the unconstrained and sphere-constrained cases.

\subsection{Unconstrained NPHiSD}\label{sec31}
Following the high-index saddle dynamics proposed in\,\cite{yin2019high}, the state variable $\phi$ is updated according to the following dynamics
\begin{equation} \label{eq:update_phi}
	\beta^{-1} \dot{\phi} = \Big(I-2\sum_{i=1}^k v_iv_i^\top\Big)F(\phi),
\end{equation}
where $V = \text{span}\{v_1,v_2,\cdots,v_k\}$ is the ascent subspace with orthonormal basis $\{v_i\}_{i=1}^k$ and $\beta>0$ is a relaxation parameter. 

Considering the GQR of an index-$p$ GSP ($p\leq k$), we divide the search process into multiple segments. In each segment, we denote the initial state as $\bar{\phi}$ whose corresponding Hessian $H(\bar{\phi})$ has a nullspace denoted as $\mathcal{W}^{n}(\bar{\phi}) =\text{span}\{\bar{v}^n_1, \cdots,\bar{v}^n_{l_n}\}$. Now we focus our attention on the segment containing $\bar\phi$, where we determine $V$ in the complement of $\mathcal{W}^{n}(\bar{\phi})$ to obtain an efficient ascent subspace (the determination of such a segment will be discussed in the next section)
\begin{align}
	\min_{v_i} \langle v_i,H(\phi)v_i\rangle, \quad s.t. \quad \begin{cases}
		&\langle v_i,v_j \rangle = \delta_{ij},\, j = 1,2,\cdots,i,\\
            & v_i \bot \mathcal{W}^{n}(\bar{\phi}),\, i = 1, \cdots, k.\\
	\end{cases} \label{eq:min-v}
\end{align}
In principle, one can solve the minimization problem \cref{eq:min-v} at each time instant to support the evolution of the dynamics of $\phi$, which is computationally expensive. Instead, following\,\cite{yin2019high}, we update $\{v_i\}_{i=1}^k$ in \cref{eq:update_phi} through a dynamical approach coupled with
\begin{align}\label{eq:update_v}
	\gamma^{-1}\dot{v}_i & = - \Big(I- v_iv_i^{\top} - \sum_{j=1}^{i-1} 2v_jv_j^{\top} -\sum_{l=1}^{l_n}\bar{v}^n_{l}(\bar{v}_{l}^n)^{\top}\Big) H(\phi)v_i,\quad i = 1,2,\cdots,k, \notag
\end{align}
where $\gamma>0$ is a relaxation parameter.
We combine the above derivations to obtain the NPHiSD
\begin{align}\label{eq:NPHiSD}
\left\{
\begin{aligned}
\beta^{-1} \dot{\phi} &= \Big(I-2\sum_{i=1}^k v_iv_i^\top\Big)F(\phi),\\
\gamma^{-1}\dot{v}_i &= - \Big(I- v_iv_i^{\top} - \sum_{j=1}^{i-1} 2v_jv_j^{\top} - \sum_{l=1}^{l_n}\bar{v}^n_{l}(\bar{v}_{l}^n)^{\top}\Big) H(\phi)v_i,\quad i = 1,2,\cdots,k.
\end{aligned}
\right.
\end{align}
\begin{remark}
Indeed, when we determine the first $p$ ascent directions, that is, $\{v_1,\dots,v_p\}$, the nullspace is naturally excluded since the dynamics lies within the GQR of an index-$p$ GSP. Thus it is not necessary to involve $\sum_{l=1}^{l_n}\bar{v}_{l}\bar{v}_{l}^{\top} H(\phi)v_i$ in \cref{eq:NPHiSD}. Nevertheless, we keep this term in the dynamics for the sake of preserving structures, as shown in the following lemma.
\end{remark}

\begin{lemma}\label{jy1}
If the initial values  $\{v_{i}^{(0)}\}_{i=1}^k$ of the NPHiSD \cref{eq:NPHiSD} satisfy 
\begin{align}\label{eq:ncons}
\langle v_{i}^{(0)},v_{j}^{(0)} \rangle = \delta_{ij},~~ 1\leq i,j \leq k;~~\langle v_{i}^{(0)},\bar v^n_l \rangle=0,~~1\leq i\leq k,~~1\leq l\leq l_n.
\end{align}
Then these constraints are preserved for any $t>0$
\begin{align}
\langle v_{i}^{(t)},v_{j}^{(t)} \rangle = \delta_{ij},~~ 1\leq i,j \leq k;~~\langle v_{i}^{(t)},\bar v^n_l \rangle=0,~~1\leq i\leq k,~~1\leq l \leq l_n.\notag
\end{align}
\end{lemma}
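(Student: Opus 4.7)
The plan is to prove both invariants simultaneously by showing that the "constraint violations" obey a linear homogeneous ODE and then appealing to uniqueness. Concretely, I would introduce the scalar quantities
\begin{equation*}
\mathbf{a}_{ij}(t) := \langle v_i(t), v_j(t)\rangle - \delta_{ij},\qquad \mathbf{b}_{il}(t) := \langle v_i(t), \bar v^n_l\rangle,
\end{equation*}
so that the hypothesis \cref{eq:ncons} is exactly $\mathbf{a}_{ij}(0)=\mathbf{b}_{il}(0)=0$. Assuming without loss of generality that the nullspace basis $\{\bar v^n_l\}_{l=1}^{l_n}$ is orthonormal (otherwise orthonormalize it first, which leaves the projector $\sum_l \bar v^n_l(\bar v^n_l)^\top$ unchanged), the goal reduces to showing $\mathbf{a}_{ij}\equiv 0$ and $\mathbf{b}_{il}\equiv 0$.

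First I would handle the nullspace orthogonality. Differentiating $\mathbf{b}_{il}$ and using that $\bar v^n_l$ is time-independent,
\begin{equation*}
\gamma^{-1}\dot{\mathbf{b}}_{il} = -(\bar v^n_l)^\top H(\phi) v_i + (v_i^\top H(\phi)v_i)\,\mathbf{b}_{il} + \sum_{j<i}2(v_j^\top H(\phi)v_i)\,\mathbf{b}_{jl} + \sum_{l'=1}^{l_n}((\bar v^n_{l'})^\top H(\phi) v_i)\,\delta_{l'l},
\end{equation*}
where the Kronecker $\delta_{l'l}$ comes from orthonormality of the nullspace basis. The first and last terms cancel, leaving a linear homogeneous ODE in $\{\mathbf{b}_{\cdot l}\}$ (for each fixed $l$), with coefficients that are bounded along any bounded trajectory. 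Picard--Lindel\"of uniqueness then forces $\mathbf{b}_{il}(t)\equiv 0$.

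Next I would treat the mutual orthonormality. For $i\le j$, expanding $\dot{\mathbf{a}}_{ij} = \langle \dot v_i, v_j\rangle + \langle v_i, \dot v_j\rangle$ via the second line of \cref{eq:NPHiSD} produces cross terms containing $\langle v_{j'}, v_j\rangle$, $\langle v_{j'}, v_i\rangle$, and $\langle \bar v^n_l, v_{i\text{ or }j}\rangle$, each of which can be re-expressed as $\mathbf{a}_{\cdot\cdot}$ or $\mathbf{b}_{\cdot\cdot}$ plus a $\delta$-term. The key algebraic step is that the constant (``$+1$'' or ``$\delta$'') parts produce $-v_j^\top H(\phi) v_i - v_i^\top H(\phi) v_j$ together with the contribution at $j'=i$ in the sum $\sum_{j'<j}2(v_{j'}^\top H(\phi) v_j)\cdot 1$; by symmetry of $H(\phi)$ these exactly cancel. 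What remains is linear homogeneous in $(\mathbf{a},\mathbf{b})$, and combined with the ODE for $\mathbf{b}$ yields a coupled linear homogeneous system whose zero solution is unique.

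The only mildly delicate step is the bookkeeping in the second part: separating the $j'=i$ term in $\sum_{j'<j}$ from the rest so the symmetry cancellation with $-v_j^\top H v_i$ is visible. Everything else is a direct computation, and the structural reason the lemma holds is that every projector appearing in \cref{eq:NPHiSD} has been designed precisely so that applying it to any current $v_m$ (or to $\bar v^n_l$) returns a vector still orthogonal to the constraint directions, which is exactly what produces the cancellations above.
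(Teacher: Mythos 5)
Your proposal is correct and follows essentially the same route as the paper: both define the constraint violations $\langle v_i,v_j\rangle-\delta_{ij}$ and $\langle v_i,\bar v^n_l\rangle$, show they satisfy a homogeneous linear ODE system (using orthonormality of $\{\bar v^n_l\}$ to cancel the $(\bar v^n_l)^\top H(\phi)v_i$ terms and the symmetry of $H(\phi)$ together with the $j'=i$ term to cancel the remaining constants), and conclude by uniqueness. The cancellations you identify as the key steps are exactly the ones carried out in the paper's computation.
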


\begin{proof}
Define a vector $X(t)$ with entries $\langle v_i^{(t)}, \bar v_l^n\rangle$ for $1\leq i\leq k$ and $1\leq l\leq l_n$ and $\langle v_i^{(t)}, v_j^{(t)}\rangle-\delta_{ij}$ for $1\leq i,j\leq k$. We aim to show that $X$ satisfies a homogeneous linear ordinary differential equation such that $X(0)=0$ (due to \cref{eq:ncons}) implies $X(t)\equiv 0$ for any $t>0$, which yields the claim. In particular, it is equivalent to show that the derivative of each entry of $X$ is a linear combination of the entries of $X$.

For $1\leq i\leq k$ and $1\leq l\leq l_n$, we apply the orthonormality of $\{\bar v_l^n\}_{l=1}^{l_n}$ to get
	\begin{align*}
		 \frac{d}{dt}\langle v_i^{(t)}, \bar v_l^n\rangle = & \langle \frac{d}{dt}v_i^{(t)}, \bar v_l^n\rangle=- \Big(-\langle\bar v_l^n, v_i^{(t)}\rangle (v_i^{(t)})^{\top} - \sum_{j=1}^{i-1} 2\langle\bar v_l^n,v_j^{(t)}\rangle (v_j^{(t)})^{\top} \Big) H(\phi)v_i^{(t)}.
	\end{align*}
	For $m>i$, we have
	{\footnotesize
    \begin{align*}
		& \frac{d}{dt}\langle v_{i}^{(t)},v_{m}^{(t)} \rangle =\langle \frac{d v_{i}^{(t)}}{dt},v_m^{(t)} \rangle+\langle v_{i},\frac{d v_{m}^{(t)}}{dt}\rangle\\
         &=- \Big((v_m^{(t)})^\top- \langle v_m^{(t)},v_i^{(t)}\rangle (v_i^{(t)})^{\top} - \sum_{j=1}^{i-1} 2\langle v_m^{(t)},v_j^{(t)}\rangle (v_j^{(t)})^{\top}  \\
         &~- \sum_{l=1}^{l_n}\langle v_m^{(t)},\bar{v}^n_{l}\rangle(\bar{v}_{l}^n)^{\top}\Big) H(\phi)v_i^{(t)}- \Big((v_i^{(t)})^\top- \langle v_i^{(t)},v_m^{(t)}\rangle (v_m^{(t)})^{\top} \\
         &~- \sum_{j=1}^{m-1} 2\langle v_i^{(t)},v_j^{(t)}\rangle (v_j^{(t)})^{\top}
         - \sum_{l=1}^{l_n}\langle v_i^{(t)},\bar{v}^n_{l}\rangle (\bar{v}_{l}^n)^{\top}\Big) H(\phi)v_m^{(t)}\\
         &=2(v_m^{(t)})^\top H(\phi)v_i^{(t)}(\langle v_i^{(t)},v_i^{(t)}\rangle-1) \notag \\
         &~+ \Big( \langle v_m^{(t)},v_i^{(t)}\rangle (v_i^{(t)})^{\top} + \sum_{j=1}^{i-1} 2\langle v_m^{(t)},v_j^{(t)}\rangle (v_j^{(t)})^{\top} + \sum_{l=1}^{l_n}\langle v_m^{(t)},\bar{v}^n_{l}\rangle(\bar{v}_{l}^n)^{\top}\Big) H(\phi)v_i^{(t)}\\
         &~+ \Big( \langle v_i^{(t)},v_m^{(t)}\rangle (v_m^{(t)})^{\top} + \sum_{j=1,j\neq i}^{m-1} 2\langle v_i^{(t)},v_j^{(t)}\rangle (v_j^{(t)})^{\top} + \sum_{l=1}^{l_n}\langle v_i^{(t)},\bar{v}^n_{l}\rangle(\bar{v}_{l}^n)^{\top}\Big) H(\phi)v_m^{(t)}.
	\end{align*}}
	The derivative of $\langle v_{i}^{(t)},v_{m}^{(t)} \rangle$ for $m\leq i$ can be evaluated similarly. The above derivations indicate that the derivative of each entry of $X$ is a linear combination of the entries of $X$, which completes the proof.
\end{proof}

\subsection{Sphere-constrained NPHiSD}
For problems such as locating excited states of the Bose-Einstein condensate, we need to search high-index GSPs on a sphere constraint $\|\phi\|^2 =1$. To accommodate such a constraint in the NPHiSD method, we define the Riemannian gradient as ${\rm grad}\, E(\phi) = \mathcal{P}_{\phi} F(\phi)$
where $\mathcal{P}_{\phi}$ is the projection operator in the tangent space $T(\phi) = \{ \psi: \langle  \psi, \phi \rangle = 0\}$ defined as
\begin{align}\label{eq:projection}
    \mathcal{P}_{\phi}\psi = \psi-\langle \psi, \phi\rangle \phi=(I-\phi\phi^\top)\psi.
\end{align}

Similar to the unconstrained case, to find the index-$k$ GSP, we select $k$ eigenvectors corresponding to the smallest $k$ eigenvalues of the Riemannian Hessian. The Riemannian Hessian operator is defined as 
\begin{align}
{\rm Hess}\,E(\phi)[v] = \mathcal{P}_{\phi}(\partial_{v}{\rm grad}\,E(\phi)), \notag
\end{align}
for $v \in T(\phi)$.

For the degenerate problems, the Riemannian Hessian has the nullspace $\mathcal{W}^{n}(\phi) \subset T(\phi)$. Similar to the unconstrained case, we focus on the GQR of an index-$p$ stationary point. For the segment containing $\bar\phi$, we replace $\mathcal{W}^{n}(\phi)$ by $\mathcal{W}^{n}(\bar{\phi})$ and search upward along $V = {\rm span}\{v_1, v_2, \cdots, v_k\}$ with the constraint $\phi \bot \mathcal{W}^{n}(\bar{\phi})$. The dynamics of searching for an index-$k$ GSP can be written as
\begin{equation*}
\beta^{-1} \dot{\phi} = (I - 2\sum_{i=1}^k v_iv_i^{\top} -\sum_{l=1}^{l_n}\bar v^n_l(\bar v^n_l)^\top){\rm grad}\, E(\phi).
\end{equation*}

The ascent space $V$ can be solved by 
\begin{align}
	\min_{v_i} \langle v_i,{\rm Hess}\,E(\phi)[v_i]\rangle, \quad s.t. \quad \left\{\begin{aligned}
		&\langle v_i,v_j \rangle = \delta_{ij},\, j =1,2,\cdots,i,\\
            &  v_i\bot \phi, v_i \bot \mathcal{W}^{n}(\bar{\phi}),\, i =1, 2, \cdots, k.
	\end{aligned} \right. \notag
\end{align}
Similar to the derivations in\,\cite{yin2019high}, we adopt the dynamics approach to update $\{v_i\}_{i=1}^k$ as follows
\begin{align*}
\gamma^{-1} \dot{v}_i =& -\bigg( I-v_iv_i^\top-2\sum_{j=1}^{i-1}v_jv_j^\top-\sum_{l=1}^{l_n}\bar v^n_l(\bar v^n_l)^\top\bigg){\rm Hess}\,E(\phi)[v_i] -\langle v_i, {\rm grad}\,E(\phi)\rangle \phi.
\end{align*}

By \cref{eq:projection}, we have ${\rm grad}\, E(\phi)=(I- \phi \phi^{\top})F(\phi) $ and ${\rm Hess}\,E(\phi)[v]
= -\mathcal{P}_{\phi}(H(\phi) v) +\langle \phi, F(\phi) \rangle v$, 
such that the dynamics of $\phi$ and $v_i$ can be simplified as
\begin{align*}
\beta^{-1} \dot{\phi} &= \bigg(I - 2\sum_{i=1}^k v_iv_i^{\top} - \sum_{l=1}^{l_n}\bar v^n_l(\bar v^n_l)^\top\bigg) (I- \phi \phi^{\top})F(\phi) \\
&= \bigg(I- \phi \phi^{\top} - 2\sum_{i=1}^k v_iv_i^{\top}- \sum_{l=1}^{l_n}\bar v^n_l(\bar v^n_l)^\top\bigg) F(\phi).
\end{align*}
and
\begin{align*}
\gamma^{-1} \dot{v}_i =&\bigg( I-v_iv_i^\top-2\sum_{j=1}^{i-1}v_jv_j^\top-\sum_{l=1}^{l_n}\bar v^n_l(\bar v^n_l)^\top\bigg)(\mathcal{P}_{\phi}(H(\phi)v_i) - \langle \phi,F(\phi)\rangle v_i) \notag \\
&+\langle v_i, F(\phi) - \phi \phi^\top F(\phi)\rangle \phi \notag  \\
=&\bigg( I-\phi \phi^\top - v_iv_i^\top-2\sum_{j=1}^{i-1}v_j v_j^\top-\sum_{l=1}^{l_n}\bar v^n_l(\bar v^n_l)^\top\bigg)H(\phi)v_i +\langle v_i, F(\phi)\rangle \phi,
\end{align*}
We combine the above derivations to obtain the sphere-constrained NPHiSD to locate an index-$k$ GSP
\begin{align}\label{sadk}
	\left\{
	\begin{aligned}
		\beta^{-1}\dot\phi = & \bigg(I -\phi \phi^\top-2\sum_{j=1}^k v_jv_j^\top -\sum_{l=1}^{l_n}\bar v^n_l(\bar v^n_l)^\top \bigg)F(\phi),\\
        \gamma^{-1}\dot v_i = & \bigg( I-\phi \phi^\top-v_iv_i^\top-2\sum_{j=1}^{i-1}v_jv_j^\top-\sum_{l=1}^{l_n}\bar v^n_l(\bar v^n_l)^\top\bigg)H(\phi)v_i\\
        & + \phi v_i^\top F(\phi),~~1\leq i\leq k.
	\end{aligned}
	\right.\end{align}
When $p=k$, it becomes the sphere-constrained HiSD proposed in\,\cite{yin2022constrained}. 

Compared to the unconstrained case, the sphere-constrained NPHiSD preserves more structures, as shown in the following lemma.
    
\begin{lemma}
		If $\beta=\gamma$ and the following initial constraints hold for the sphere-constrained NPHiSD \cref{sadk} 
       $$\|\phi^{(0)}\|_2=1,~ \langle\phi^{(0)},v_{i}^{(0)}\rangle=0,~\langle v_{i}^{(0)},v_{j}^{(0)}\rangle=\delta_{ij},~\langle \phi^{(0)},\bar{v}_{l}^{n}\rangle=0, ~\langle v_{i}^{(0)},\bar{v}_{l}^{n}\rangle=0,$$
       for $1\leq i,j\leq k$ and $1\leq l\leq l_n$, then the following properties hold for any $t>0$
		$$\|\phi^{(t)}\|_2=1,~ \langle \phi^{(t)},v_{i}^{(t)}\rangle=0,~\langle v_{i}^{(t)},v_{j}^{(t)}\rangle=\delta_{ij}, ~\langle \phi^{(t)},\bar{v}_{l}^{n}\rangle=0, ~\langle v_{i}^{(t)},\bar{v}_{l}^{n}\rangle=0.$$
	\end{lemma}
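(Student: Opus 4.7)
The plan is to mirror the argument of Lemma \ref{jy1}: stack all the identities that must be preserved into a single vector
\[
X(t) = \bigl(\|\phi^{(t)}\|_2^2-1,\ \langle \phi^{(t)}, v_i^{(t)}\rangle,\ \langle v_i^{(t)}, v_j^{(t)}\rangle-\delta_{ij},\ \langle \phi^{(t)}, \bar v_l^n\rangle,\ \langle v_i^{(t)}, \bar v_l^n\rangle\bigr)_{i,j,l},
\]
and show that $\dot X = A(t)X$ for some (continuous) coefficient matrix $A(t)$. By hypothesis $X(0)=0$, and uniqueness for a homogeneous linear ODE then forces $X(t)\equiv 0$, which is exactly the conclusion. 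Thus the whole proof reduces to verifying, entry by entry, that each time derivative is a linear combination of the entries of $X$.

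First I would handle the sphere constraint: differentiating $\|\phi\|_2^2$ gives $2\langle\phi,\dot\phi\rangle$, and inserting the first equation of \cref{sadk} produces a single copy of $\langle\phi,F(\phi)\rangle$ multiplied by $(1-\|\phi\|^2)$ together with terms proportional to $\langle\phi,v_j\rangle$ and $\langle\phi,\bar v_l^n\rangle$; each factor is an entry of $X$, as required. The identities $\langle\phi,\bar v_l^n\rangle$ and $\langle v_i,\bar v_l^n\rangle$ are the easiest: when $\bar v_l^n$ is contracted against $\dot\phi$ or $\dot v_i$, every operator of the form $uu^{\top}$ in the projector becomes $\langle\bar v_l^n,u\rangle u^{\top}$, and the resulting inner products $\langle\bar v_l^n,\phi\rangle$, $\langle\bar v_l^n,v_j\rangle$, $\langle\bar v_l^n,\bar v_{l'}^n\rangle-\delta_{ll'}$ (using orthonormality of the fixed basis $\{\bar v_l^n\}$) are again entries of $X$.

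The delicate step is the orthogonality $\langle\phi,v_i\rangle=0$, which is where the assumption $\beta=\gamma$ enters. Expanding
\[
\frac{d}{dt}\langle\phi,v_i\rangle = \beta\bigl\langle v_i,(\text{RHS of }\dot\phi)\bigr\rangle + \gamma\bigl\langle\phi,(\text{RHS of }\dot v_i)\bigr\rangle,
\]
the leading piece of the first term is $\beta(v_i^{\top}F(\phi))(1-\|\phi\|^2-\text{something}\cdot\langle\phi,v_i\rangle-\cdots)$, and the second term contains exactly the compensating contribution $\gamma\,\phi^{\top}\phi\,v_i^{\top}F(\phi)$ arising from the $\phi v_i^{\top}F(\phi)$ correction in $\dot v_i$. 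With $\beta=\gamma$ the unwanted $v_i^{\top}F(\phi)$ piece telescopes into a multiple of $(1-\|\phi\|^2)$, i.e.\ an entry of $X$; all other contributions come pre-multiplied by $\langle\phi,v_j\rangle$, $\langle\phi,\bar v_l^n\rangle$, or $\langle v_i,v_j\rangle-\delta_{ij}$, which are entries of $X$.

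Finally, the orthonormality $\langle v_i,v_m\rangle=\delta_{im}$ is treated exactly as in the proof of Lemma \ref{jy1}: one symmetrizes $\langle\dot v_i,v_m\rangle+\langle v_i,\dot v_m\rangle$ and observes that the symmetric terms $2\,v_m^{\top}H(\phi)v_i$ come multiplied by $(\langle v_i,v_i\rangle-1)$ or $(\langle v_m,v_m\rangle-1)$, while every asymmetric term carries a coefficient $\langle v_m,v_j\rangle$, $\langle v_m,\bar v_l^n\rangle$, or $\langle v_m,\phi\rangle$; each of these is an entry of $X$ (the last one being precisely the case handled in the previous paragraph). Collecting the bounds gives the required linear system $\dot X=A(t)X$ with $A(t)$ depending continuously on $\phi^{(t)}$, $v_i^{(t)}$, $H(\phi^{(t)})$, and $F(\phi^{(t)})$, and Grönwall/uniqueness finishes the proof. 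The main obstacle is the bookkeeping in the $\langle\phi,v_i\rangle$ step, where the $\beta=\gamma$ hypothesis must be used to cancel the only term that is not manifestly proportional to an entry of $X$.
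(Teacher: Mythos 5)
Your proposal is correct and follows exactly the route the paper takes: the paper's proof simply states that the argument of Lemma~\ref{jy1} carries over with the vector $X$ enlarged to include the additional invariants, which is precisely what you carry out. In fact you supply more detail than the paper does, and you correctly identify the one genuinely new point, namely that the residual $(-\beta+\gamma\|\phi\|^2)\langle v_i,F(\phi)\rangle$ in $\frac{d}{dt}\langle\phi,v_i\rangle$ becomes a multiple of the entry $\|\phi\|^2-1$ of $X$ only when $\beta=\gamma$.
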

\begin{proof}
    %The proof of this lemma can be performed in a manner analogous to that of Lemma \ref{jy1}, where the $X$ contains more entries to accommodate the quantities of interest in this lemma. We omit the proof due to similarity.
    The proof can be carried out analogously to that of \cref{jy1}, with the matrix $X$ extended to include the additional terms required to preserve the constraints in this lemma. Hence, the stated invariants hold for all $t>0$, which completes the proof.
\end{proof}

\section{Feasibility analysis}
In this section, we provide theoretical characterizations for the segment of the NPHiSD on which the nullspace can be fixed without affecting the effective ascent of the dynamics for an index-$k$ GSP (the sphere-constrained case can be studied similarly and is thus omitted). 
For clarity, we define the nullspace at $\phi$ as  $\mathcal{W}^n(\phi)$ and its nonzero subspace as $\mathcal{W}^c(\phi) = {\rm span}\{u^c_1, u^c_2, \cdots, u^c_{l_c}\}$ corresponding to  nonzero eigenvalues $\lambda^c_1 \leq \lambda^c_2\leq  \cdots \leq \lambda^c_{l_c}$. Without loss of generality, we assume $k\leq l_c$. 

Ideally, to effectively search for an index-$k$ GSP, one can ascend along the ascent subspace $\mathcal{V}^c(\phi)={\rm span}\{u^c_1,u^c_2,\cdots, u^c_k\}$ and descend along its orthogonal complement $(\mathcal{V}^c)(\phi)^{\bot}$, where the $\mathcal{V}^c(\phi)$ can be determined by minimizing the Rayleigh quotient $\langle v_i, H(\phi) v_i \rangle$ under constraints $\langle v_i, v_j \rangle = \delta_{ij}$ ($i,j = 1,2,..,k$) and $\mathcal{V}^c(\phi) \bot \mathcal{W}^n(\phi)$. 
However, due to the evolution of $\phi$, evaluating $\mathcal{W}^n(\phi)$ at each $\phi$ is expensive in practice. Thus, we instead perform the upward search segment by segment and in each segment, we adopt the ascent subspace $\mathcal{V}(\phi):={\rm span}\{v_1,v_2,\ldots,v_k\}$ by minimizing the Rayleigh quotient as above under constraints $\langle v_i, v_j \rangle = \delta_{ij}$ ($i,j = 1,2,..,k$) and $\mathcal{V}^c(\phi) \bot \mathcal{W}^n(\bar\phi)$, where $\mathcal{W}^n(\bar\phi)$ is a fixed nullspace $\mathcal{W}^n(\bar{\phi})$ at the initial state $\bar{\phi}$. Note that this is exactly the process described by \cref{eq:min-v}. In other words, we will compensate for the statements in \cref{sec31} by determining the segments mentioned therein.

As $\phi$ evolves, the nullspace $\mathcal{W}^n(\phi)$ may gradually diverge from the fixed nullspace $\mathcal{W}^n(\bar{\phi})$. Consequently, the constraint $\mathcal{V}(\phi) \bot \mathcal{W}^n(\bar\phi)$ in \cref{eq:min-v} may not fully exclude the nullspace $\mathcal{W}^n(\phi)$ from the dynamics, which may again lead to the ascent along part of the nullspace. To ensure $k$ efficient directions for upward search, we should at least ensure ${\rm dim } ( \mathcal{V}(\phi) \cap \mathcal{W}^c(\phi) )= k$, i.e., $\mathcal{V}(\phi)$ contains $k$ linearly independent directions in $\mathcal{W}^c(\phi)$. Then we remain to determine the segment where this condition holds.
For clarity, we define $C_{\Theta} = \|\sin \Theta(\mathcal{W}^n(\phi), \mathcal{W}^n( \bar{\phi} )\|_F$  to measure the difference between $\mathcal{W}^n(\phi)$ and $\mathcal{W}^n(\bar{\phi})$. To measure the difference between real and computed ascent directions, we define $\theta^c_j$ as the angle between $u^c_j$ and $v_j$ for $ j=1,2,\cdots,k$ and denote $\rho_{i} = \sum_{j=1}^{i-1}\sin^2(\theta^c_j), i= 2,3,\cdots,k$. A necessary and sufficient condition is then established in the following theorem.

\begin{theorem}\label{le:find_direction}
In the GQR of an index-$p$ GSP for $0<p\leq k$ (or GLM for $p=0$), if the dimension of the nullspace keeps unchanged on the trajectory of \cref{eq:update_phi}, the ascent subspace $\mathcal{V}(\phi) = {\rm span}\{v_1, v_2, \cdots, v_k\}$ determined by \cref{eq:min-v}  satisfies
\begin{align}
  {\rm dim} ( \mathcal{V}(\phi) \cap \mathcal{W}^c(\phi)) = k,\text{ if and only if }\,0\leq C^2_{\Theta} + \rho_{k} < 1.\notag
\end{align}
\end{theorem}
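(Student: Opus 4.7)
The plan is to recast the dimensional identity as an orthogonality statement and then analyze the constrained minimization \cref{eq:min-v} by induction on the index $i$. Since $\mathcal{V}(\phi)$ has dimension $k$ and $\mathcal{W}^c(\phi)$ has dimension $l_c\geq k$, the equality $\dim(\mathcal{V}(\phi)\cap\mathcal{W}^c(\phi))=k$ is equivalent to the inclusion $\mathcal{V}(\phi)\subseteq\mathcal{W}^c(\phi)$, which, using $\mathcal{W}^c(\phi)=\mathcal{W}^n(\phi)^\perp$, reduces the iff to the statement that the solutions $\{v_i\}_{i=1}^k$ of \cref{eq:min-v} all satisfy $v_i\perp\mathcal{W}^n(\phi)$ iff $C_\Theta^2+\rho_k<1$.

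Next I would introduce a principal-vector decomposition between $\mathcal{W}^n(\phi)$ and $\mathcal{W}^n(\bar\phi)$: choose orthonormal bases $\{p_l\}\subset\mathcal{W}^n(\phi)$ and $\{\bar p_l\}\subset\mathcal{W}^n(\bar\phi)$ of principal vectors with $\langle p_l,\bar p_m\rangle=\cos\theta_l\,\delta_{lm}$, and complete them by orthonormal $\{w_l\}\subset\mathcal{W}^c(\phi)$ so that $\bar p_l=\cos\theta_l\,p_l+\sin\theta_l\,w_l$ and $C_\Theta^2=\sum_l\sin^2\theta_l$. Any feasible $v\perp\mathcal{W}^n(\bar\phi)$ splits uniquely as $v=v^c+v^n$ with $v^c\in\mathcal{W}^c(\phi)$ and $v^n\in\mathcal{W}^n(\phi)$, and the constraint $\langle v,\bar p_l\rangle=0$ forces $\langle v^n,p_l\rangle=-\tan\theta_l\,\langle v^c,w_l\rangle$ whenever $\theta_l<\pi/2$. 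Hence a feasible $v$ lies in $\mathcal{W}^c(\phi)$ if and only if $\langle v^c,w_l\rangle=0$ for every $l$ with $\theta_l\neq 0$, and the squared $\mathcal{W}^n(\phi)$-mass of $v$ is an explicit quadratic form in the coordinates $\langle v^c,w_l\rangle$.

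The induction on $i$ carries the load. For $i=1$, a Courant-Fischer analysis of \cref{eq:min-v} on $\mathcal{W}^n(\bar\phi)^\perp$ shows that $v_1\in\mathcal{W}^c(\phi)$ iff the feasible subspace $\mathcal{W}^c(\phi)\cap\mathcal{W}^n(\bar\phi)^\perp$ carries a direction realizing a Rayleigh quotient strictly smaller than the value that any mixed direction can achieve through the coupling above; through the explicit formula of the previous paragraph this translates into $C_\Theta^2<1$, which is exactly $C_\Theta^2+\rho_1<1$. For the inductive step, each previously computed $v_j$ is aligned with $u_j^c$ up to the angle $\theta_j^c$, so imposing $v_i\perp v_j$ removes a budget of $\sin^2\theta_j^c$ of unit-norm mass that would otherwise be available within $\mathcal{W}^c(\phi)$. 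Summing these losses over $j=1,\ldots,i-1$ and adding the principal-angle penalty $C_\Theta^2$ yields the budget inequality $C_\Theta^2+\rho_i<1$: under it, a feasible unit vector entirely in $\mathcal{W}^c(\phi)$ with Rayleigh quotient strictly below what any competing direction with nonzero $\mathcal{W}^n(\phi)$-component can attain still exists, and the Courant-Fischer minimizer $v_i$ must land there; once the inequality fails, every feasible $v_i$ is forced to acquire a nonzero $\mathcal{W}^n(\phi)$-component.

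The hard part is the clean additive bookkeeping in the inductive step: one must argue that the squared $\mathcal{W}^n(\phi)$-mass that \cref{eq:min-v} must spend at the $i$-th direction is controlled exactly by $C_\Theta^2+\rho_i$, with no cross-terms between the perturbation directions $\{w_l\}$ coming from the misalignment of the two nullspaces and the perturbation directions of the previously selected $\{v_j\}_{j<i}$. This is where the hypothesis that the nullspace dimension is preserved along the trajectory is essential, since it guarantees a consistent block structure for the principal-angle decomposition, so that together with the orthogonality $v_i\perp v_j$ enforced by \cref{eq:min-v} the two families of constraints act on essentially disjoint components of the unit-norm budget of $v_i$ and the contributions $C_\Theta^2$ and $\rho_i$ add without interference.
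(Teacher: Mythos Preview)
Your opening reduction---$\dim(\mathcal{V}(\phi)\cap\mathcal{W}^c(\phi))=k$ iff $\mathcal{V}(\phi)\subseteq\mathcal{W}^c(\phi)$ iff every $v_i\perp\mathcal{W}^n(\phi)$---is formally correct, but it sets up a target that is simply false, and this is where the proposal breaks. In the GLM case $p=0$ every nonzero eigenvalue $\lambda_j^c$ is strictly positive while the nullspace contributes zero to the Rayleigh quotient. Hence, whenever $C_\Theta>0$ and one of your perturbation directions $w_l$ overlaps with $u_1^c$, a feasible unit vector of the form $au_1^c+bu_1^n$ (with $b\neq0$ chosen so that $v\perp\mathcal{W}^n(\bar\phi)$) has Rayleigh quotient $a^2\lambda_1^c<\lambda_1^c$, strictly smaller than the pure-$\mathcal{W}^c$ candidate $u_1^c$. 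So the minimizer $v_1$ of \cref{eq:min-v} \emph{deliberately} picks up a $\mathcal{W}^n(\phi)$-component, and your claim that ``under the budget inequality the Courant--Fischer minimizer $v_i$ must land in $\mathcal{W}^c(\phi)$'' already fails at $i=1$. The same mechanism propagates to every $i\le k$.

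The paper's proof avoids this trap precisely because it never asserts $v_i\in\mathcal{W}^c(\phi)$. It first bounds $\|\mathcal{P}_{\mathcal{W}^n(\phi)}v_i\|_2\le C_\Theta$ via the CS decomposition, then builds explicit minimizers of \cref{eq:min-v} of the form
\[
v_i=\sum_{j<i}\sin(\theta_j^c)\,u_j^c+\sqrt{1-C_\Theta^2-\rho_i}\,u_i^c+\sum_{l}b_{i,l}u_l^n,
\]
with \emph{nonzero} nullspace part whenever $C_\Theta>0$, and stacks the $\mathcal{W}^c$-coefficients into a lower-triangular matrix $P$ whose diagonal entries are $\sqrt{1-C_\Theta^2-\rho_i}$. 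The condition $C_\Theta^2+\rho_k<1$ is exactly $\operatorname{rank}P=k$, i.e., the projections $\mathcal{P}_{\mathcal{W}^c(\phi)}v_i$ are linearly independent. The paper's phrase ``$\dim(\mathcal{V}\cap\mathcal{W}^c)=k$'' is to be read in this projection sense (``$\mathcal{V}$ supplies $k$ linearly independent ascent directions in $\mathcal{W}^c$''), consistent with the surrounding text, not as the literal set-theoretic intersection you used. To repair your approach you would have to abandon the inclusion $\mathcal{V}\subseteq\mathcal{W}^c$ and instead track the rank of $\mathcal{P}_{\mathcal{W}^c(\phi)}|_{\mathcal{V}(\phi)}$, which essentially forces you back to the paper's explicit construction.
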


\begin{remark}
In this theorem, we assume that the dimension of the nullspace remains unchanged on the trajectory of \cref{eq:update_phi}. First, it is worth mentioning that this is a natural assumption in several circumstances. For instance, the translational or rotational symmetry may cause the existence of the nullspace, and the symmetry breaking will lead to a change in its dimension. Since the symmetry breaking is typically triggered at a valley ridge inflect point \cite{ramquet2000critical} that locates on the boundary of the GQR of a saddle point, the dimension of the nullspace may thus keep unchanged within the GQR, which is aligned with the assumption that the nullspace keeps unchanged on the trajectory of \cref{eq:update_phi} within the GQR.

Second, if the dimension of the nullspace changes within the GQR, we could detect such a change by computation-friendly indicators and then update the segment to ensure that the dimension of the nullspace remains unchanged within each segment. Specifically, the following two cases will be separately considered:
\begin{itemize}
\item If the dimension of the nullspace tends to increase, then there exists a non-zero eigenvalue that tends to be zero, and such a change can be simply detected by the evolution of the Rayleigh quotient in \cref{eq:min-v}.

\item If the dimension of the nullspace tends to decrease, we could check the value of
$
\max_{i} \, |\langle \bar{u}^n_i, H(\phi) \bar{u}^n_i \rangle| $. If this value exceeds a threshold $\varepsilon_{n}>0$, then we could update the segment.
\end{itemize}
 
\end{remark}

\begin{proof}
    The proof will be divided into two parts.
    
    \textbf{Case 1: $p=0$}. For this case, we consider the segment within the GQR of a GLM. Denote $\{\lambda_l^n\}_{l=1}^{l_n}$ and $\{\lambda_j^c\}_{j=1}^{l_c}$ as zero and non-zero eigenvalues of $H(\phi)$, respectively, with the corresponding eigenvectors  $\{u_l^n\}_{l=1}^{l_n}$ and $\{u_j^c\}_{j=1}^{l_c}$, where the entries in $\{\lambda_j^c\}_{j=1}^{l_c}$ are ordered from the smallest to the largest. Denote the matrices $\Lambda^c_i :=\diag\{\lambda^c_i,\lambda_{i+1}^c,\ldots,\lambda_{l_c}^c\}$ for $i\leq l_c$, $U^n =(u^n_1,u^n_2,\cdots,u^n_{l_n})^{\top}$ and $U_i^c =(u^c_i,u^c_{i+1},\cdots,u^c_{l_c})^{\top}$. %Here, $U^n$ is the matrix form of $\mathcal{W}^n(\phi)$. 
Then the Hessian $H(\phi)$ can be expressed as
$H(\phi) = (U_1^c)^{\top}\Lambda_1^c U_1^c$.
Similarly, we can define the matrix $\bar{U}^n$ according to $\mathcal{W}^n(\bar{\phi})$. For $\phi$ in the neighborhood of $\bar{\phi}$, we have $U^n = \bar{U}^n + \delta U^n$ for some perturbation $\delta U^n$. By the cosine-sine decomposition\,\cite{davis1969some,golub2013matrix, paige1994history,stewart1990matrix}, there exist orthogonal matrices $U_1$ and $U_2$ such that 
\begin{align*}
\mathcal{P}_{\mathcal{W}^n(\phi)}(I- \mathcal{P}_{\mathcal{W}^n(\bar{\phi})}) = U_1\sin\Theta(\mathcal{W}^n(\phi),\mathcal{W}^n(\bar{\phi}))U^{\top}_2.
\end{align*} 

Then we intend to determine the smallest $k$ nonzero eigenvectors $v_1,v_2, \cdots, v_k $ from \cref{eq:min-v}.  As $v_1 \bot \mathcal{W}^n(\bar{\phi})$, we have $v_1 = (I- \mathcal{P}_{\mathcal{W}^n(\bar{\phi})})v_1$ such that
\begin{align*}
\|\mathcal{P}_{\mathcal{W}^n(\phi)} v_1\|_2 &= \|\mathcal{P}_{\mathcal{W}^n(\phi)}(I- \mathcal{P}_{\mathcal{W}^n(\bar{\phi})}) v_1\|_2  \\
    &\leq \|\mathcal{P}_{\mathcal{W}^n(\phi)}(I- \mathcal{P}_{\mathcal{W}^n(\bar{\phi})})\|_2 \leq \|\mathcal{P}_{\mathcal{W}^n(\phi)}(I- \mathcal{P}_{\mathcal{W}^n(\bar{\phi})})\|_F \notag \\
& = \sqrt{{\rm tr}(\langle U_1\sin\Theta(\mathcal{W}^n(\phi),\mathcal{W}^n(\bar{\phi}))U^{\top}_2, U_1\sin\Theta(\mathcal{W}^n(\phi),\mathcal{W}^n(\bar{\phi}))U^{\top}_2\rangle)} \notag \\
&= \sqrt{{\rm tr}(\sin^2 \Theta(\mathcal{W}^n(\phi),\mathcal{W}^n(\bar{\phi})))} = \|\sin \Theta(\mathcal{W}^n(\phi),\mathcal{W}^n(\bar{\phi}))\|_F =C_{\Theta},\notag
\end{align*}
where $\|\cdot\|_F$ is the Frobenius norm and ${\rm tr}(\cdot)$ represents the trace of the matrix. 
Denote
\begin{align*}
    v_1 & =  \mathcal{P}_{\mathcal{W}^c(\phi)}v_1 + \mathcal{P}_{\mathcal{W}^n(\phi)}v_1  = a_{11} u^c_1 + \cdots +  a_{1,l_c} u^c_{l_c} + b_{11} u^n_{1} + \cdots +  b_{1,l_n} u^n_{l_n}, 
\end{align*}
with $(a_{11})^2 + \cdots + (a_{1,l_c} )^2 + (b_{11})^2 + \cdots + (b_{1,l_n} )^2 =1$. When  $C_{\Theta} \leq 1$, we have $(a_{11})^2 + \cdots + (a_{1,l_c} )^2 \geq 1 - C^2_{\Theta}$ and
\begin{align}
    \langle v_1, H(\phi) v_1\rangle &= (a_{11})^2\lambda^c_{1} +(a_{12})^2\lambda^c_{2} + \cdots + (a_{1,l_c})^2\lambda^c_{l_c} \notag  \\
    &\geq ((a_{11})^2 + (a_{12})^2  + \cdots + (a_{1,l_c})^2)\lambda_1^{c}\geq (1-C_{\Theta}^{2})\lambda_1^{c}. \notag
    \end{align}
Since $v_1 \in \mathbb{R}^n$, $v_1$ can be selected in the following form to attain the above lower bound 
\begin{align}\label{jy4}
v_1 = \sqrt{1-C_{\Theta}^2} u^c_1 + \sum_{l=1}^{l_n} b_{1,l} u^n_l, \, \lambda_1 = (1 -C^2_{\Theta})\lambda^c_1, 
\end{align}
where $\sum_{l=1}^{l_n} b^2_{1,l} =C_{\Theta}$.

 Suppose the eigenpairs $(\lambda_{j}, v_{j})$ for $ j=1,2,\cdots,i-1$ are obtained from \cref{eq:min-v} with the splittings
${v}_{j} = u^c_{j} + w^c_{j}$
 for some perturbations $\{w^c_{j}\}_{j=1}^{i-1}$,
 and we intend to determine $v_i$ and $\lambda_i$. Denote 
\begin{align} 
v_i = a_{i1} u^c_1 + \cdots +  a_{i,l_c} u^c_{l_c} + b_{i1} u^n_{1} + \cdots +  b_{i,l_n} u^n_{l_n}, \label{eq: vi_present}
\end{align}
where $(a_{i1})^2 + \cdots + (a_{i,l_c} )^2 \geq 1 - C^2_{\Theta}$ by similar derivations in the case of $v_1$. 
Based on the orthonormality of $\{v_j\}_{j=1}^{i-1}$, we have
\begin{align}\label{eq:rayleigh_vi} 
   \langle v_i, H(\phi) v_i \rangle = & \sum_{j=1}^{i-1}\langle v_i, \lambda^c_j u^c_j(u^c_j)^{\top} v_i \rangle + \langle v_i, (U^c_{i})^{\top}\Lambda^c_{i} U^c_{i} v_i \rangle \\
   =& \sum_{j=1}^{i-1}\langle v_i, \lambda^c_j (v_j - w^c_j)(v_j - w^c_j)^{\top} v_i \rangle + \langle v_i, (U^c_{i})^{\top}\Lambda^c_{i} U^c_{i} v_i \rangle \notag\\
   =&  \sum_{j=1}^{i-1}\lambda_j^c( (w^c_j)^{\top} v_i)^2+ \langle v_i, (U^c_{i})^{\top}\Lambda^c_{i} U^c_{i} v_i \rangle \notag\\
  = &   \sum_{j=1}^{i-1}\lambda_j^c( \|w^c_j\|_2 \|v_i\|_2 \cos(\theta^c_j/2))^2 + \langle v_i, (U^c_{i})^{\top}\Lambda^c_{i} U^c_{i} v_i \rangle \notag\\
=&  \sum_{j=1}^{i-1}\lambda_j^c\sin^2(\theta^c_j) + \langle v_i, (U^c_{i})^{\top}\Lambda^c_{i} U^c_{i} v_i \rangle  \notag \\
    \geq& \sum_{j=1}^{i-1} \lambda_j^c\sin^2(\theta^c_j) + (1 -C^2_{\Theta} -\sum_{j=1}^{i-1}\sin^2(\theta^c_j))\lambda_i^c, \notag
\end{align}
where we used $\|w^c_j\|_2 = 2\|u_j^c\|_2 \sin(\theta^c_j/2) = 2\sin(\theta^c_j/2)$ with a simple illustration shown in \cref{fig:angles}. 
\begin{figure}[!htp]
	\centering
	\includegraphics[width=0.4\textwidth]{./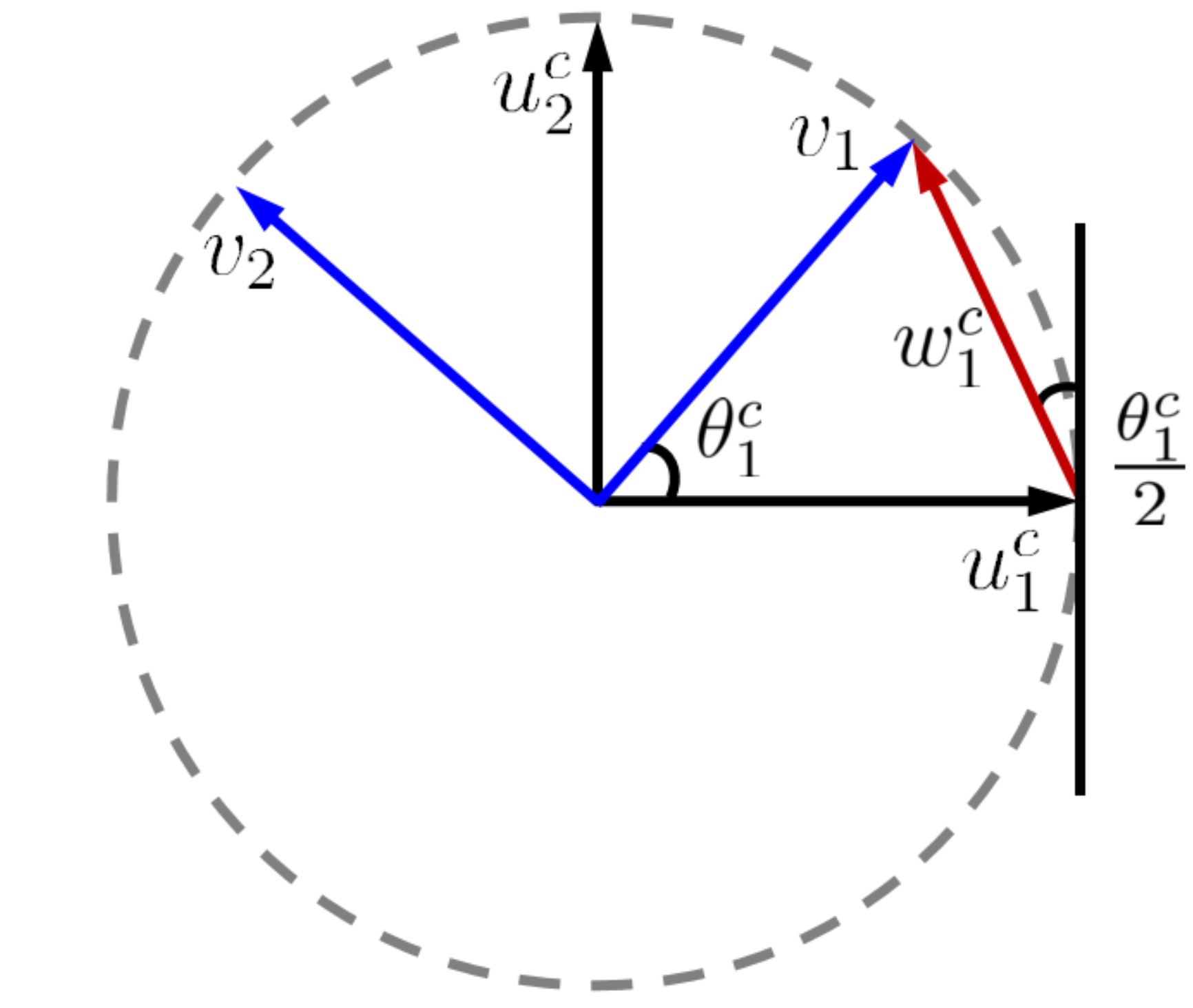}
	\caption{A simple illustration of $v_j$, $u^c_j$ and $\theta_j^c$ for $j=1$.}
	\label{fig:angles}
\end{figure}
Similar to the derivations around \cref{jy4}, we conclude that $\lambda_i$ takes the following form
\begin{align}
\lambda_i= \sum_{j=1}^{i-1} \lambda_j^c\sin^2(\theta^c_j) + (1 -C^2_{\Theta} -\sum_{j=1}^{i-1}\sin^2(\theta^c_j))\lambda_i^c, \notag
\end{align}
with the corresponding eigenvector $v_i$ 
\begin{align}
{v}_i = \sum_{j=1}^{i-1} \sin(\theta^c_j) u_j^c + \sqrt{1 -C^2_{\Theta} -\sum_{j=1}^{i-1}\sin^2(\theta^c_j)} u_i^c + \sum_{l=1}^{l_n} b_{i,l} u^n_l, \notag
\end{align}
where $\sum_{l=1}^{l_n} b^2_{i,l} =C_{\Theta}$.

Denote
$\rho_{i} = \sum_{j=1}^{i-1}\sin^2(\theta^c_j), i= 2,3,\cdots,k$ with $\rho_{1} = 0$ and
\begin{align*}
P = \left(\begin{array}{cccc}
\sqrt{1-C^2_{\Theta}-\rho_{1}} &                &        & \\
\sin(\theta_1^c)    &  \sqrt{1-C^2_{\Theta}-\rho_{2}} &   & \\
 \vdots                   &  \vdots                &  \ddots &\\
\sin(\theta_1^c)&\sin(\theta_2^c) & \cdots       &\sqrt{1-C^2_{\Theta}-\rho_{k}}
\end{array}\right),
\end{align*}
and
\begin{align*}
Q = \left(\begin{array}{cccc}
b_{11} & b_{12} & \ldots       & b_{1,l_n} \\
b_{21} & b_{22} & \ldots       & b_{2,l_n} \\
 \vdots & \vdots                  &                   & \vdots\\
b_{k1} &  b_{k2} &\ldots       & b_{k,l_n}
\end{array}\right).
\end{align*}
Then
we have 
\begin{align}
 V(\phi) = PV^c(\phi) + QU^n, \label{eq:transform_PQ}
\end{align}
where $V(\phi) = ({v}_1, {v}_2, \cdots, {v}_k )^{\top}$ and $V^c(\phi) = (u^c_1, u^c_2, \cdots, u^c_k )^{\top}$ are concatenated from basis vectors of $\mathcal{V}(\phi)$ and $\mathcal{V}^c(\phi)$, respectively.
As $V(\phi) \bot U^n$, we need ${\rm rank }(P) = k$ to ensure ${\rm rank }(V(\phi)) = k$, which in turn requires
$
0\leq C^2_{\Theta} +\rho_{k}<1$.

\textbf{Case 2: $0<p\leq k$}. For the state in the GQR of the index-$p$ GSP, the first $p$ smallest eigenvalues are negative. 
If $ 0 \leq C_{\Theta} <1$, then the maximum principal angle $ 0\leq \theta_{l_n}(\mathcal{W}^n(\phi),\mathcal{W}^n(\bar{\phi})) < \pi/2$, which implies that no non-zero vector $v \in \mathcal{W}^n(\phi)$ orthogonal to $ \mathcal{W}^n(\bar{\phi})$. Equivalently, 
$(\mathcal{W}^n(\bar{\phi}))^{\bot}  \cap \mathcal{W}^n(\phi) = \{\bm{0}\}$.
Therefore, solving \eqref{eq:min-v} can obtain $p$ orthonormal eigenvectors $\{v_1,v_2,\cdots,v_p\}$ in $(\mathcal{W}^n(\phi))^\bot$, which corresponds to $p$ negative eigenvalues of $H(\phi)$. Hence, the computation of the first $p$ eigenvectors imposes no restrictions on the choice of segment.

For $v_i$ with $ i = p+1,\cdots, k$,  the nullspace should be excluded from the dynamics. 
Similar to \cref{eq: vi_present} and \cref{eq:rayleigh_vi}, we have
\begin{align*}%\label{eq:rayleigh_vi_p}
   \langle v_i, H(\phi) v_i \rangle &=  \sum_{j=1}^{i-1}\langle v_i, \lambda^c_j u^c_j(u^c_j)^{\top} v_i \rangle + \langle v_i, (U^c_{i})^{\top}\Lambda^c_{i} U^c_{i} v_i \rangle  \\
   &\hspace{-0.4in} = \sum_{j=1}^{i-1} \lambda_j^c\sin^2(\theta^c_j) + \langle v_i, (U^c_{i})^{\top}\Lambda^c_{i} U^c_{i} v_i \rangle   = \sum_{j=1}^{i-1} \lambda_j^c\sin^2(\theta^c_j) + \sum_{j=i}^{l_c}(a_{i,j})^2\lambda_j   \notag  \\
   &\hspace{-0.4in} \geq \sum_{j=1}^{i-1} \lambda_j^c\sin^2(\theta^c_j) + (\sum_{j=i}^{l_c}(a_{i,j})^2)\lambda_i   \geq \sum_{j=1}^{i-1} \lambda_j^c\sin^2(\theta^c_j) + (1 -C^2_{\Theta} -\sum_{j=1}^{i-1}\sin^2(\theta^c_j))\lambda_i^c, \notag
\end{align*}
and one can directly verify that the above lower bound can be attained by selecting
\begin{align}
\lambda_i= \sum_{j=1}^{i-1} \lambda_j^c\sin^2(\theta^c_j) + (1 -C^2_{\Theta} -\sum_{j=1}^{i-1}\sin^2(\theta^c_j))\lambda_i^c, \notag
\end{align}
with the corresponding eigenvector $v_i$ 
\begin{align*}
{v}_i = \sum_{j=1}^{i-1} \sin(\theta^c_j) u_j^c + \sqrt{1 -C^2_{\Theta} -\sum_{j=1}^{i-1}\sin^2(\theta^c_j)} u_i^c + \sum_{l=1}^{l_n} b_{i,l} u^n_l,
\end{align*}
where $\sum_{l=1}^{l_n} b^2_{i,l} =C_{\Theta}$. 

Based on this expression, let $\hat{V}(\phi) = (v_{p+1},v_{p+2}, \cdots, v_{k})^{\top}$,
{\footnotesize
\begin{align*}
\hat{P} = \left(\begin{array}{ccccccc}
\sin(\theta_{1}^c) & \cdots &\sin(\theta_{p}^c) &\sqrt{1-C^2_{\Theta}-\rho_{p+1}} &                 &        & \\
\sin(\theta_{1}^c) & \cdots &\sin(\theta_{p}^c) & \sin(\theta_{p+1}^c)    &\sqrt{1-C^2_{\Theta}-\rho_{p+2}} &        & \\
 \vdots                   &                  & \vdots &  \vdots & \vdots & \ddots\\
\sin(\theta_{1}^c) & \cdots &\sin(\theta_{p}^c) &  \sin(\theta_{p+1}^c)& \sin(\theta_{p+2}^c)                 & \cdots       &\sqrt{1-C^2_{\Theta}-\rho_{k}}
\end{array}\right),
\end{align*}}
and 
\begin{align*}
\hat{Q} = \left(\begin{array}{cccc}
b_{p+1,1} &  b_{p+1,2}               &  \cdots      & b_{p+1,l_n} \\
b_{p+2,1} &  b_{p+2,2}               &  \cdots      & b_{p+2,l_n} \\
 \vdots                   &    \vdots              &  & \vdots\\
b_{k,1} &  b_{k,2}               &  \cdots      & b_{k,l_n}
\end{array}\right),
\end{align*}
such that $
    \hat{V}(\phi) = \hat{P}V^c(\phi) + \hat{Q} U^n$, where $V^c(\phi)$ is defined similarly as that in \cref{eq:transform_PQ}.
To ensure ${\rm dim}(\mathcal{V}(\phi) \cap \mathcal{W}^c(\phi)) = k$, we require ${\rm rank } (\hat{P}) = k-p$, which is valid if and only if $
0\leq C^2_{\Theta} +\rho_{k}<1$.
Thus, we complete the proof. \end{proof}

\begin{remark}
$C^2_{\Theta} +\rho_{k} = 1$ implies that $\hat{v}^c_k$ can be linearly represented by $\hat{v}^c_1, \hat{v}^c_2$, $\cdots$, $\hat{v}^c_{k-1}$. This fails to guarantee $k$
independent ascent directions. Consequently, the nullspace must be updated, and a new segment must be initiated. Furthermore, direct evaluation of $C_{\Theta}$ and $\rho_k$ is computationally expensive. Alternatively, we update the segment whenever at least one eigenvalue approaches zero (or smaller than a given small threshold).
\end{remark}

\section{Numerical approximation}\label{sec5}
We consider numerical methods for NPHiSD and their error estimates\,\cite{li2018near,nesterov2013introductory}. The sphere-constrained NPHiSD can be discretized and analyzed similarly by combining the techniques in\,\cite{zhang2023discretization}, and we thus omit the details.

For a time interval $[0,T]$ for some $T>0$, we define a uniform partition $0=t_0<t_1<\cdots<t_N=T$ for some $N>0$ with the step size $\tau=T/N$.
We use the Euler scheme to discretize the time derivatives of the NPHiSD \cref{eq:NPHiSD}
\begin{align}\label{eq:discrete_NPHiSD}
\left\{
	\begin{aligned}
	\phi^{(t_{n})} =&\phi^{(t_{n-1})}+\tau\beta\bigg(I -2\sum_{j=1}^k v_j^{(t_{n-1})}(v_j^{(t_{n-1})})^\top \bigg)F(\phi^{(t_{n-1})})+O(\tau^2) ,\\
	 v_i^{(t_{n})}=&v_i^{(t_{n-1})}-\tau\gamma\bigg( I-v_i^{(t_{n-1})}(v_i^{(t_{n-1})})^\top-2\sum_{j=1}^{i-1}v_j^{(t_{n-1})}(v_j^{(t_{n-1})})^\top\\
	 &-\sum_{l=1}^{l_n}\bar v^n_l (\bar v^n_l)^\top\bigg)H(\phi^{(t_{n-1})})v_i^{(t_{n-1})}+O(\tau^2),~~1\leq i\leq k,
    \end{aligned}\right.
\end{align}
equipped with initial conditions
\begin{align}\label{eq:initial_conditions}
	\phi^{(t_0)}=\phi^{(0)},~~v_i^{(t_0)}=v_{i}^{(0)},~~1\leq i\leq k,
\end{align}
satisfying the condition \cref{eq:ncons}.

Similar to the numerical methods of HiSD in the literature, such as\,\cite{luo2024semi,zhang2022error}, both explicit and semi-implicit schemes can be considered for the NPHiSD.

\underline{\textbf{Explicit scheme}}
We simply drop the truncation errors in \cref{eq:discrete_NPHiSD} to obtain the explicit Euler scheme of the NPHiSD \cref{eq:NPHiSD} with the approximations $\{\phi_n,v_{i,n}\}_{n=1,i=1}^{N,k}$ to $\{\phi^{(t_n)},v_i^{(t_n)}\}_{n=1,i=1}^{N,k}$
\begin{align}\label{eq:explicit_NPHiSD}
	\left\{
	\begin{aligned}
		\phi_{n} =&\phi_{n-1}+\tau\beta\bigg(I -2\sum_{j=1}^k v_{j,n-1}v_{j,n-1}^\top \bigg)F(\phi_{n-1}),\\
		\tilde v_{i,n}=&v_{i,n-1}-\tau\gamma\bigg( I-v_{i,n-1}v_{i,n-1}^\top\\
	 &-2\sum_{j=1}^{i-1}v_{j,n-1}v_{j,n-1}^\top-\sum_{l=1}^{l_n}\bar v^n_l(\bar v^n_l)^\top\bigg)H(\phi_{n-1})v_{i,n-1},~~1\leq i\leq k,\\
		v_{i,n}= &\frac{1}{Y_{i,n}}\bigg(\ds\tilde v_{i,n}-\sum_{j=1}^{i-1}(\tilde v_{i,n}^\top v_{j,n})v_{j,n}\bigg),~~1\leq i\leq k.
	\end{aligned}
	\right.
\end{align}
for $1\leq n\leq N$ and
\begin{align}
\begin{aligned}
	\ds Y_{i,n}:\,=\bigg\|\tilde v_{i,n}-\sum_{j=1}^{i-1}(\tilde v_{i,n}^\top v_{j,n})v_{j,n}\bigg\|_2=\bigg(\|\tilde v_{i,n}\|_2^2-\sum_{j=1}^{i-1}(\tilde v_{i,n}^\top v_{j,n})^2\bigg)^{1/2}, \notag
\end{aligned}
\end{align}
equipped with the initial conditions \cref{eq:initial_conditions} satisfying the constraints in \cref{eq:ncons}.

The third equation of \cref{eq:explicit_NPHiSD} corresponds to the Gram-Schmidt orthogonalization procedure, ensuring the set $\{v_{i,n}\}_{i=1}^k$ remains orthonormal, consistent with the continuous problem \cref{eq:NPHiSD}. Note that such orthonormal preservation inherently determines the formulation of the dynamics of $\phi$ and is critical to ensure the convergence of the algorithm to the target saddle point, referencing\,\cite{miao2025construction} for numerical demonstrations. 

The explicit scheme automatically preserves the nullspace as shown in the following lemma.
\begin{lemma}
  Under the constraints for initial values in \cref{eq:ncons}, the numerical solutions to the explicit scheme \cref{eq:explicit_NPHiSD} satisfies 
  \begin{align}
\langle v_{i,n},\bar v^n_l \rangle=0,~~1\leq i\leq k,~~1\leq l \leq l_n,~~0\leq n\leq N. \notag
\end{align}
\end{lemma}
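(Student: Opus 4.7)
The plan is to prove the claim by induction on the time step $n$, with the base case $n=0$ following from \cref{eq:ncons}. Assume the inductive hypothesis that $\langle v_{i,n-1},\bar{v}_l^n\rangle=0$ for all $1\le i\le k$ and $1\le l\le l_n$. The key structural observation is that the projector $I-v_{i,n-1}v_{i,n-1}^\top-2\sum_{j=1}^{i-1}v_{j,n-1}v_{j,n-1}^\top-\sum_{l=1}^{l_n}\bar{v}_l^n(\bar{v}_l^n)^\top$ appearing in the update of $\tilde v_{i,n}$ is designed precisely so that its transpose annihilates every $\bar{v}_l^n$; this is where the nullspace correction term pays off.

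First I will show that $\langle \tilde v_{i,n},\bar{v}_l^n\rangle=0$. Taking the inner product of the second equation of \cref{eq:explicit_NPHiSD} with $\bar{v}_l^n$, the leading term $\langle v_{i,n-1},\bar{v}_l^n\rangle$ vanishes by the inductive hypothesis. For the remaining contribution, letting $w:=H(\phi_{n-1})v_{i,n-1}$, I expand
\begin{align*}
&\Big\langle \bar{v}_l^n,\Big(I-v_{i,n-1}v_{i,n-1}^\top-2\sum_{j=1}^{i-1}v_{j,n-1}v_{j,n-1}^\top-\sum_{l'=1}^{l_n}\bar{v}_{l'}^n(\bar{v}_{l'}^n)^\top\Big)w\Big\rangle \\
&=\langle \bar{v}_l^n,w\rangle-\langle\bar{v}_l^n,v_{i,n-1}\rangle\langle v_{i,n-1},w\rangle -2\sum_{j=1}^{i-1}\langle\bar{v}_l^n,v_{j,n-1}\rangle\langle v_{j,n-1},w\rangle -\sum_{l'=1}^{l_n}\delta_{ll'}\langle\bar{v}_{l'}^n,w\rangle.
\end{align*}
The middle two sums vanish by the inductive hypothesis, while the last sum collapses to $\langle\bar{v}_l^n,w\rangle$ by the orthonormality of $\{\bar v_l^n\}_{l=1}^{l_n}$, which exactly cancels the first term. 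Hence $\langle \tilde v_{i,n},\bar{v}_l^n\rangle=0$ for every $i$ and $l$.

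Next I perform an inner induction on $i$ to transfer the orthogonality from $\tilde v_{i,n}$ to $v_{i,n}$ through the Gram--Schmidt step. For $i=1$, $v_{1,n}=\tilde v_{1,n}/Y_{1,n}$ inherits $\langle v_{1,n},\bar{v}_l^n\rangle=0$ directly (assuming $Y_{1,n}\neq 0$, which is a standard nondegeneracy condition for Gram--Schmidt to be well defined). Assuming $\langle v_{j,n},\bar{v}_l^n\rangle=0$ for all $j<i$, I apply $\langle\cdot,\bar{v}_l^n\rangle$ to the third equation of \cref{eq:explicit_NPHiSD}:
\begin{equation*}
\langle v_{i,n},\bar{v}_l^n\rangle=\frac{1}{Y_{i,n}}\Big(\langle \tilde v_{i,n},\bar{v}_l^n\rangle-\sum_{j=1}^{i-1}(\tilde v_{i,n}^\top v_{j,n})\langle v_{j,n},\bar{v}_l^n\rangle\Big)=0,
\end{equation*}
completing the inner induction, the outer induction on $n$, and thereby the proof.

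I do not anticipate a serious obstacle: the statement is essentially an algebraic identity, and the only delicate point is checking that the Gram--Schmidt denominators $Y_{i,n}$ stay nonzero. This is implicit in the well-posedness of the scheme (the prior lemma guarantees the analogous orthonormality for the continuous problem, and $\tilde v_{i,n}$ remains a small perturbation of $v_{i,n-1}$ in the explicit Euler step for $\tau$ small), so one can proceed under that standing assumption without affecting the validity of the nullspace-preservation result.
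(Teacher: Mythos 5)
Your proof is correct and follows essentially the same route as the paper: induction on $n$, pairing the update for $\tilde v_{i,n}$ against $\bar v_l^n$ so that the nullspace-projection term cancels the $H(\phi_{n-1})v_{i,n-1}$ contribution via orthonormality, and then observing that the Gram--Schmidt step preserves the orthogonality (the paper phrases this as $v_{i,n}$ being a linear combination of $\{\tilde v_{j,n}\}_{j=1}^i$, which is your inner induction in compressed form). Your explicit remark about the nondegeneracy of $Y_{i,n}$ is a reasonable addition that the paper leaves implicit.
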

\begin{proof}
The case $n=0$ is ensured by \cref{eq:ncons}, and then we can prove by mathematical induction.
If $v_{i,n-1}^\top \bar v^n_l=0$ for $1\leq i\leq k$ and $1\leq l\leq l_n$, one can multiply $(\bar v^n_l)^\top$ on both sides of the second equation of \cref{eq:explicit_NPHiSD} and apply the orthonormality of $\{\bar v^n_l\}_{l=1}^{l_n}$ to find $(\bar v^n_l)^\top \tilde v_{i,n}=0$ for $1\leq i\leq k$ and $1\leq l\leq l_n$.  As $v_{i,n}$ is indeed a linear combination of $\{\tilde v_{j,n}\}_{j=1}^i$, we immediately obtain
\begin{align}\label{or2}
	(\bar v^n_l)^\top  v_{i,n}=0\text{ for }1\leq i\leq k\text{ and }1\leq l\leq l_n.
\end{align}
Hence, we apply mathematical induction to conclude that \cref{or2} holds for $1\leq n\leq N$ so that the numerical scheme \cref{eq:explicit_NPHiSD} automatically preserves the nullspace as in the continuous case, which completes the proof.
\end{proof}

Using the relation $\bar v_l^\top  v_{i,n}=0$ for $1\leq i\leq k$ and $1\leq l\leq l_n$, which implies that the additional factor $\sum_{l=1}^{l_n} \bar v_l\bar v_l^\top$ compared to HiSD has rare impacts in numerical analysis, one can follow exact the same procedure as the numerical analysis for the explicit scheme of HiSD in\,\cite{zhang2022error} to prove that, under certain regularity assumptions of the model, the numerical solutions to the explicit scheme \cref{eq:explicit_NPHiSD} are bounded over $t\in [0, T]$ and satisfy the following error estimate
\begin{align}
	\|\phi^{(t_n)}-\phi_n\|_2+\sum_{i=1}^k\|v_i^{(t_n)}-v_{i,n}\|_2\leq Q\tau,~~1\leq n\leq N,\notag
\end{align}
where $Q$ is a generic positive constant that is independent from $\tau$, $n$ and $N$.

\underline{\textbf{Semi-implicit scheme}}
In practice, the force $F(\phi)$ typically consists of a linear and a nonlinear part, that is,
$
F(\phi) = \mathcal{L} \phi + \mathcal{N}(\phi)$,
with the corresponding Hessian $H(\phi) = -\mathcal{L} -\mathcal{N}'(\phi)$. We replace the term $\tau\beta F(\phi_{n-1}) $ on the right-hand side of the explicit scheme of $\phi_n$ in \cref{eq:explicit_NPHiSD} by $\tau\beta (\mathcal{L}\phi_n + \mathcal{N}(\phi_{n-1}))$, and replace the term $-\tau\gamma H(\phi_{n-1})v_{i,n-1}$ on the right-hand side of the explicit scheme of $\tilde v_{i,n}$ in \cref{eq:explicit_NPHiSD} by the semi-implicit formulation $\tau\gamma (\mathcal{L}\tilde v_{i,n} + \mathcal{N}'(\phi_{n}) v_{i,n-1})$.
This yields a semi-implicit scheme for the NPHiSD \cref{eq:NPHiSD}, which produces approximations $\{\phi_n,v_{i,n}\}_{n=1,i=1}^{N,k}$ to $\{\phi(t_n),v_i(t_n)\}_{n=1,i=1}^{N,k}$
\begin{align}\label{eq:semi_implicit_NPHiSD}
	\left\{
	\begin{aligned}
		\phi_{n} =& \phi_{n-1}+\tau\beta (\mathcal L\phi_n+\mathcal N(\phi_{n-1}))-2\tau\beta \sum_{j=1}^k v_{j,n-1}v_{j,n-1}^\top F(\phi_{n-1}),\\
		\tilde v_{i,n}= & v_{i,n-1}+\tau\gamma (\mathcal L\tilde v_{i,n}+\mathcal N'(\phi_{n}) v_{i,n-1})+\tau\gamma\bigg( v_{i,n-1}v_{i,n-1}^\top\\
		& +2\sum_{j=1}^{i-1}v_{j,n-1}v_{j,n-1}^\top+  \sum_{l=1}^{l_n}\bar v^n_l(\bar v_l^n)^\top\bigg)H(\phi_{n})v_{i,n-1},~~1\leq i\leq k,\\
		\hat v_{i,n}=& \tilde v_{i,n}-\sum_{l=1}^{l_n} \tilde v_{i,n}^\top \bar v^n_l \bar v^n_l,~~1\leq i\leq k,\\
		v_{i,n}= & \frac{1}{Y_{i,n}}\bigg(\ds\hat v_{i,n}-\sum_{j=1}^{i-1}(\hat v_{i,n}^\top v_{j,n})v_{j,n}\bigg),~~1\leq i\leq k.
	\end{aligned}
	\right.
\end{align}
Compared with the explicit scheme, a further process of $\{\tilde v_{i,n}\}_{i=1}^k$ before Gram-Schmidt process is adopted to ensure their orthogonality with the nullspace, which in turn ensures the orthogonality between $\text{span}\{v_{i,n}\}_{i=1}^k$ and the nullspace $\text{span}\{\bar v^n_l\}_{l=1}^{l_n}$. Although the formulation of the semi-implicit scheme is more complicated than the explicit scheme, one can follow procedures similar to the numerical analysis for the semi-implicit scheme of HiSD in\,\cite{luo2024semi} to prove the boundedness of the numerical solutions to the semi-implicit scheme \cref{eq:semi_implicit_NPHiSD} and the error estimate
\begin{align} \label{eq:error_phi_v}
	\|\phi^{(t_n)}-\phi_n\|_2+\sum_{i=1}^k\|v_i^{(t_n)}-v_{i,n}\|_2\leq Q\tau,~~1\leq n\leq N,
\end{align}
where $Q$ is a generic positive constant that is independent of $\tau$, $n$ and $N$.

\section{Numerical experiments}
We employ the (sphere-constrained) NPHiSD for studying, e.g., the nucleation phenomenon of crystals, excited states in Bose-Einstein condensates, and Lennard-Jones clusters to show its effectiveness and universality.
In practical computations, the dynamics are terminated once
$\|F(\phi)\|_{\ell^2}<$ 1e-7 and adopt the Barzilai-Borwein step size\,\cite{yin2019high}, except for testing the temporal convergence rate, where a uniform step size is used. For PDE models in the first two examples, the semi-implicit scheme is employed, while the explicit scheme is adopted in the finite-dimensional problem in the third example. All methods are implemented in MATLAB R2021b and run on the same workstation, 12th Gen Intel (R) Core (TM) i7-12700KF, 16 GB memory, under Linux.

\subsection{Nucleation of crystals}
We study the nucleation phenomenon of crystals using the Lifshitz-Petrich model, which serves as an effective framework for describing the phases and phase transitions of quasiperiodic systems, including bifrequent Faraday waves and soft‑matter quasicrystals\,\cite{lifshitz1997theoretical,jiang2015stability,lifshitz2007soft}. The Lifshitz-Petrich model introduces a scalar order parameter $\phi(r)$ to represent the density profile in a domain $\Omega$, with the corresponding free energy 
\begin{align}\label{LP_energy}
	E(\phi) = \frac{1}{|\Omega|} \int_{\Omega} \left\{\frac{1}{2}[(1+\Delta)(q^2+\Delta)\phi]^2 + \frac{\varepsilon}{2} \phi^2 - \frac{\alpha}{3}\phi^3 + \frac{1}{4}\phi^4 \right\} {\rm d}\bm{r},
\end{align}
where $\varepsilon$ is a temperature-like controlling parameter and $\alpha$ characterizes the intensity of the three-body interaction. Here, the parameters $1$ and $q$ represent two characteristic length scales required to stabilize quasicrystals. Furthermore, we impose the mean-zero condition of the order parameter on the Lifshitz-Petrich systems to ensure the mass conservation $\int_{\Omega} \phi(r)dr = 0$, which comes from the definition of the order parameter, i.e., the deviation from average density.

\begin{figure}[!htp]
	\centering
	\includegraphics[width=0.5\textwidth]{./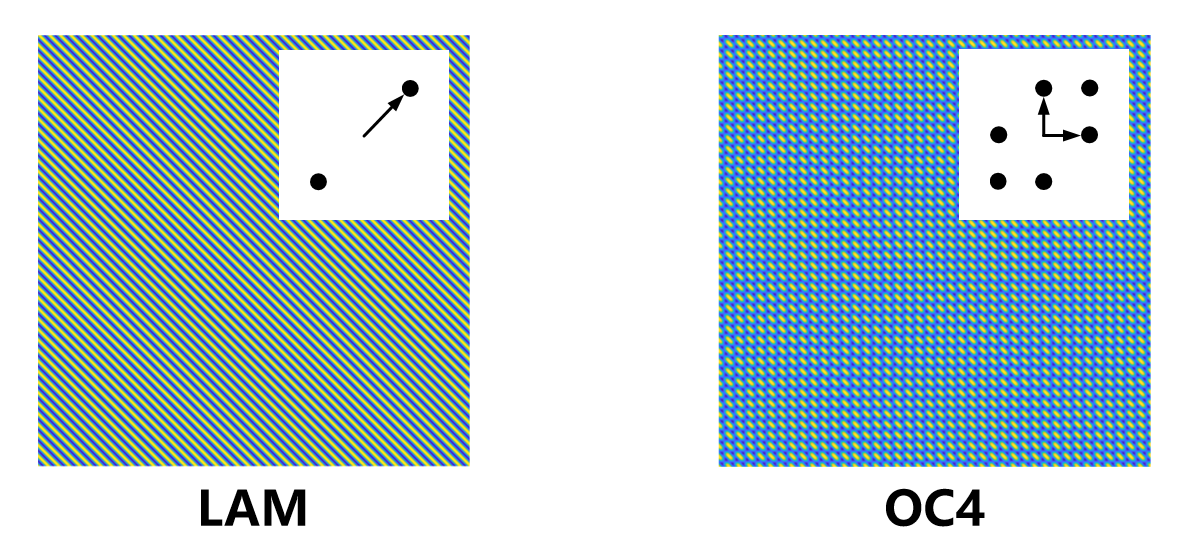}
	\caption{Stable states of the Lifshitz-Petrich model, where the subgraphs show their spectral points in the reciprocal space (the arrows present the primitive reciprocal vectors).}
	\label{fig:crystal}
\end{figure}

We choose $\Omega= [-82\pi, 82\pi)^2$, $q = 2\cos(\pi/4)$, $\varepsilon= 0.03$, $\alpha= 0.1$, and employ the Fourier pseudo-spectral method for spatial discretization with 800 spatial discretization points in each dimension \cite{zhang2008efficient}. Two stable states, lamellar (LAM) and oblique 4-fold crystal (OC4), in \cref{fig:crystal} are computed via the gradient flow approach. Their eigenvalues are computed by the locally optimal block preconditioned conjugate gradient (LOBPCG) method\,\cite{knyazev2001toward}, as indicated in \cref{tab:eigenvalues}. Since the LAM and OC4 are periodic on 1- and 2-dimensional spaces, respectively, their corresponding Hessians have 1- and 2-dimensional nullspaces, indicating that the NPHiSD method is required for the upward search. 
\begin{table}[!htp]
	\centering
	\caption{The smallest four eigenvalues of the Hessian corresponding to stable ordered structures in the Lifshitz-Petrich model. Eigenvalues less than 1e-9 are considered as zero eigenvalues. }
	\begin{tabularx}{9cm}{c|cccc}
		\hline
		States &  &Eigenvalues  & &  \\
		\hline
		LAM  & 1.002e-10 & 1.418e-6 & 1.422e-6 & 2.286e-5 \\
		OC4 & 5.696e-10  & 6.465e-10  & 4.650e-4  & 4.650e-4\\
		\hline
	\end{tabularx}
    \label{tab:eigenvalues}
\end{table}

We then construct the solution landscape of the Lifshitz-Petrich energy \cref{LP_energy} as shown in \cref{fig:solution_landscape}. Starting from the metastable state OC4, we implement the NPHiSD to locate an index-2 GSP that has two nuclei of the LAM phase with E = -1.417169e-4. From this parent state, we subsequently perform downward searches using the standard HiSD method to identify index‑1 GSPs, which correspond to transition states or critical nuclei connecting two GLMs via the minimum energy pathway.

\begin{figure}[!htp]
	\centering
	\includegraphics[width=0.7\textwidth]{./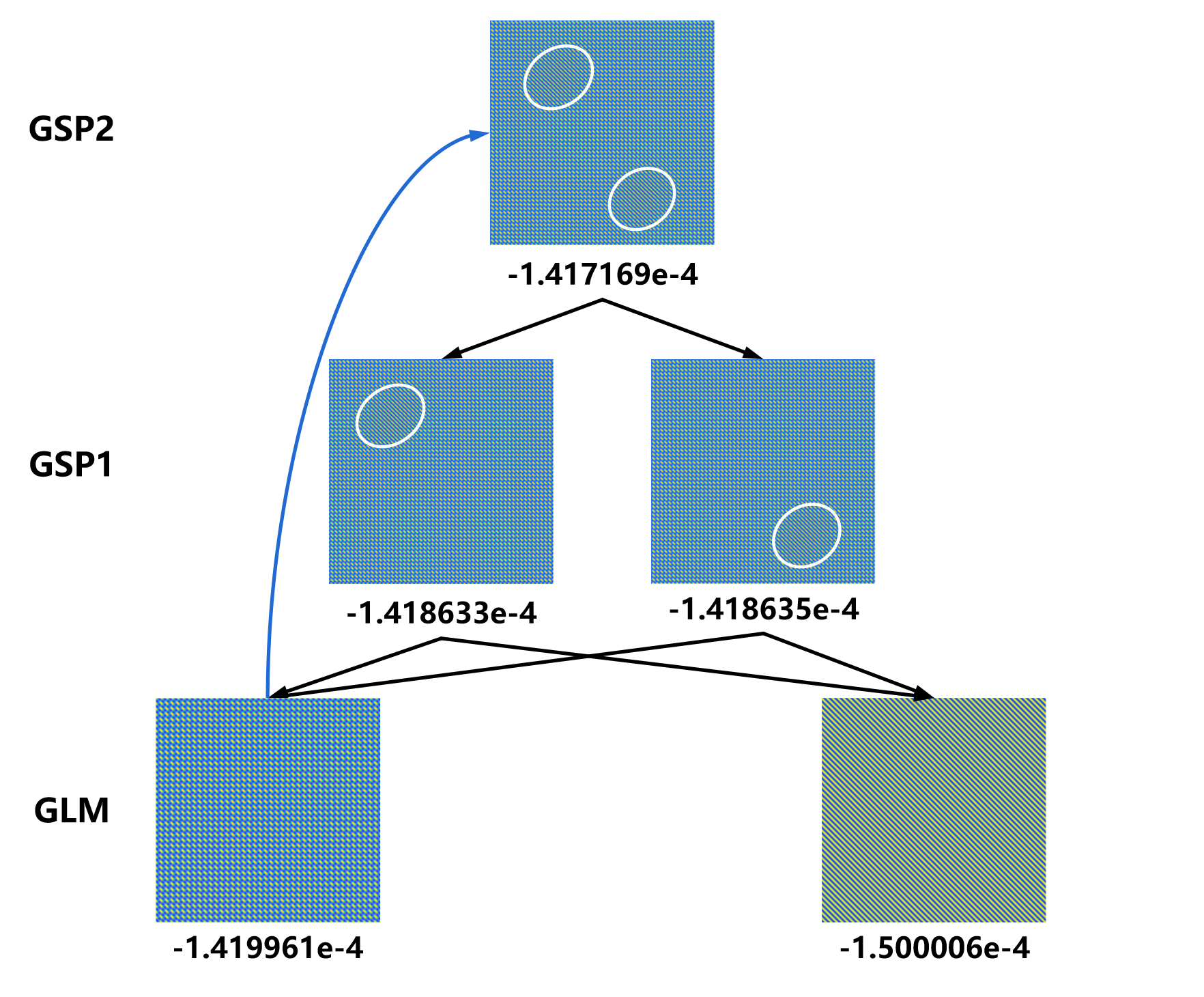}
	\caption{Solution landscape of crystal (with energy for each solution) where the GSPi (i = 1, 2) represents the index-i GSP. The critical nuclei are marked by white curves. The blue arrow represents the upward search by NPHiSD, and the block arrows represent the downward searches by HiSD (similar notations will be used subsequently).}
	\label{fig:solution_landscape}
\end{figure}

Finally, we employ the phase transition from OC4 to GSP2 in \cref{fig:solution_landscape} to test the temporal convergence orders for the semi-implicit scheme of the NPHiSD. Let $\beta = 0.2$, $\gamma = 0.3$, $T = 0.8$.
At $t = 0.8$, we adopt the numerical solution solved under the fine time step size $\tau = T/1024$ as the \textit{reference solution}. The $L^2$-errors defined in \cref{eq:error_phi_v} at $t= 0.8$ and the corresponding orders are presented in \cref{tab:error_order}, confirming the first‑order accuracy established in \cref{sec5}.

\begin{table}[ht]
	\centering
	\caption{Errors and convergence orders of semi-implicit scheme for NPHiSD method.} 
	\begin{tabular}{ccccccc}
		\hline
		\hline
		$\tau$ &0.8 & 0.4 & 0.2 & 0.1 & 0.05 & 0.025 \\ 
		\hline 
		$L^2$-error &4.815e-4 & 2.383e-4 & 1.187e-4 & 5.912e-5 & 2.932e-5 & 1.442e-5    \\
		\hline
		Order & - & 1.014 & 1.004 & 1.006 & 1.011 & 1.023 \\
		\hline
	\end{tabular}
	\label{tab:error_order}
\end{table}

\subsection{Excited states in Bose-Einstein condensate}
We aim to identify degenerate excited states in the Bose-Einstein condensate with the help of NPHiSD. Consider the dimensionless Gross-Pitaevskii energy\,\cite{bao2013mathematical,bao2004computing,liu2023constrained,yin2024revealing}
\begin{align}
	E(\phi) = \frac{1}{\Omega}\int_{\Omega} \left[\frac{1}{2} \lvert\nabla \phi(\bm{x})\rvert^2 + V(\bm{x})\lvert\phi(\bm{x})\rvert^2 + \frac{\eta}{2}\lvert\phi(\bm{x})\rvert^4\right]d\bm{x}, \label{Gross-Pitaevskii_energy}
\end{align}
on the unit sphere $\phi(\bm{x}) \in \mathcal{M} = \{\varphi(\bm{x}) \in L^2(\mathbb{R}^2) : E(\varphi) < \infty, \avint \|\varphi\|^2 d\bm{x} =1\}$. Here, $\Omega$ is a bounded domain of the system with volume $|\Omega|$, $\phi$ is the complex-valued wave function of the Bose-Einstein condensate defined on $\mathbb{R}^2$, and $\|\phi\|^2$ represents the
particle density. $\eta$ characterizes the interaction
rate and $V(\bm{x})$ is chosen as a quadrilateral trapping potential
\begin{align}
	V(\bm{x})= \frac{\omega}{2}\lvert\bm{x}\rvert^2 + \sum_{\bm{k} \in \mathcal{K}} e^{i\bm{k}^{\top}\bm{x}}, \quad \mathcal{K} = \{\bm{k}\in \mathbb{Z}^2 \mid (1, 0), (0, 1), (-1, 0), (0, -1)\}. \notag
\end{align}
Due to the rotational invariance, the stationary points of the Gross-Pitaevskii energy are usually degenerate \cite{bao2013mathematical,yin2024revealing}.

\begin{figure}[!htp]
	\centering
	\includegraphics[width=0.8\textwidth]{./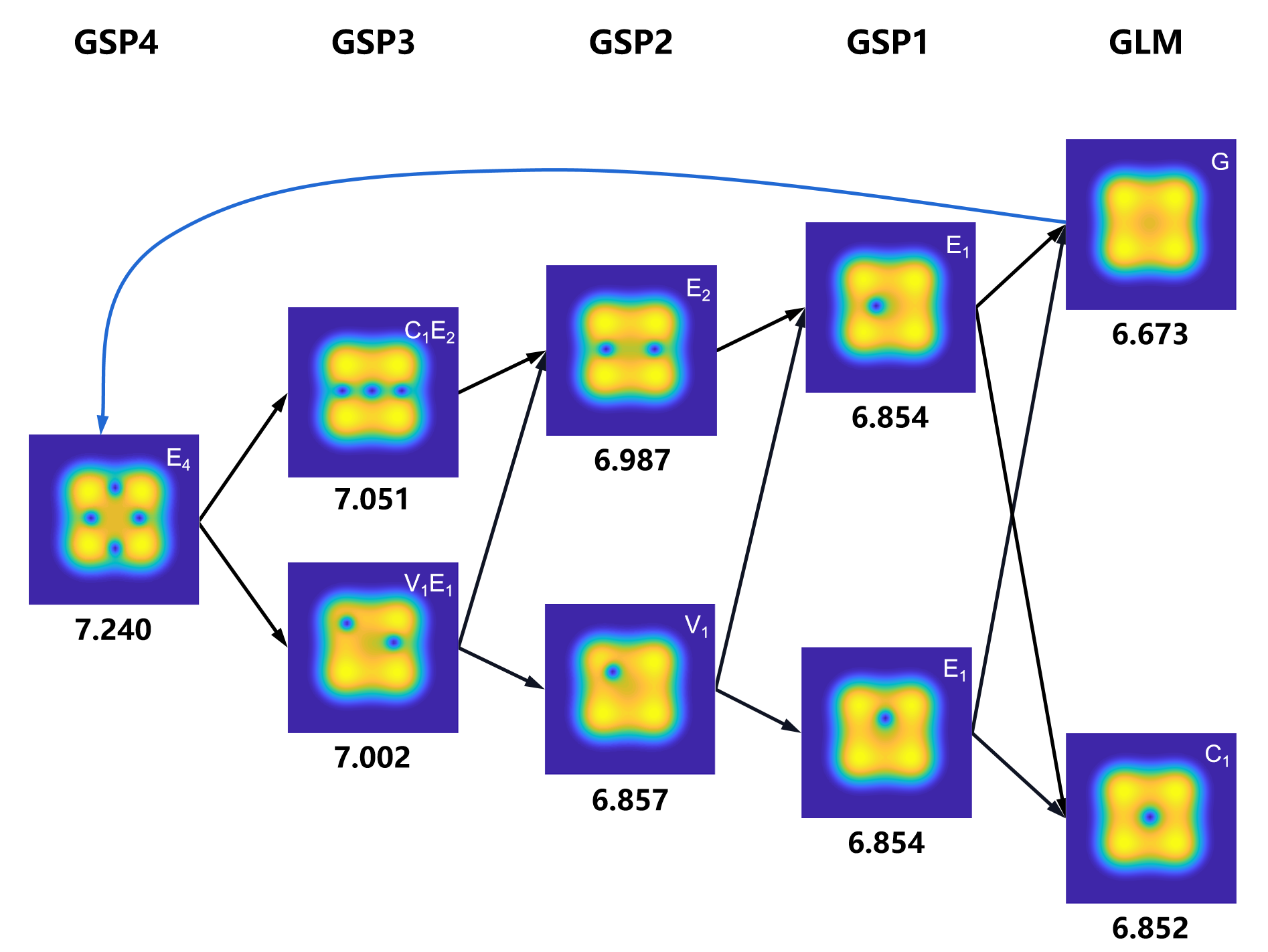}
	\caption{Vortex landscape of Bose-Einstein condensate (with energy for each solution). Here, $C$, $E$, $V$, and $G$ represent the centering, edge, and vertex vortices and the ground state, respectively, and the subscript indicates the number of vortices.}
	\label{fig:vortex_landscape}
\end{figure}
 Let $\Omega=[-2\pi,2\pi]^2$, $\omega = 1$ and $\eta = 300$, we apply the Fourier pseudo-spectral method for spatial discretization with $ 64$ degrees of freedom in each dimension. Then we construct the solution landscape of the Bose-Einstein condensate based on the Gross-Pitaevskii energy \cref{Gross-Pitaevskii_energy} with the help of the NPHiSD method shown in \cref{fig:vortex_landscape}. We first compute the ground state $G$ by the corresponding eigenvalue problems with the Newton--Noda iteration\,\cite{du2022newton}, and then search for an index-$4$ GSP by the sphere-constrained NPHiSD method. The solution landscape is then completed by downward searches by the standard HiSD method.

From \cref{fig:vortex_landscape}, we identify a vortex-adding path, i.e., $G \rightarrow E_1 \rightarrow E_2 \rightarrow C_1E_2 \rightarrow E_4$. As the number of vortices increases, the energy of the Bose-Einstein condensate system also rises, likely because the vortices enhance the kinetic energy. Furthermore, for the Bose-Einstein condensate system confined within a quadrilateral geometric structure, the position of vortices also affects the system energy. Among states with the same number of vortices, those with symmetric vortex distributions exhibit lower energy, such as $V_1E_1$ and $E_2$.

\subsection{Lennard-Jones cluster}
We consider the Lennard-Jones cluster, which provides a realistic simulation of microparticle interactions based on the Lennard-Jones potential\,\cite{schwerdtfeger2024years}
\begin{align}
\nu(r) = \frac{1}{r^{12}} - \frac{2}{r^6},\notag
\end{align}
where $r$ is the distance between two particles. The energy surface can be characterized by $V_{LJ}(\bm{r}^N)$,
\begin{align}
V_{LJ}(\bm{r}^N) = \sum_{i=1}^{N-1} \sum_{j=i+1}^N \nu(|\bm{r}_i - \bm{r}_j|), \notag
\end{align}
where $\bm{r}^N = \{\bm{r}_1, \bm{r}_2, \cdots, \bm{r}_N\}$ represents the positions of all $N$ particles. Because of rotational and translational invariances, stationary points of the Lennard–Jones energy are degenerate, and their Hessians possess nontrivial nullspaces.

We focus on the cluster system with 7 particles in 3-dimensional space\,\cite{tsai1993use,miller1997isomerization}.
We first obtain the GLM with $E = -15.5$ using the gradient descent method starting from a random configuration. By the LOBPCG method, we find that the Hessian at this GLM has a three-dimensional nullspace. We then perform an upward search by the NPHiSD method to find an index-3 GSP with the energy $E = -13.8$, and search downward for lower-index GSPs and GLMs to construct the solution landscape. As shown in \cref{fig:clusterlandscape}, multiple cluster configurations with distinct energies are identified. 

\begin{figure}[!htp]
	\centering
	\includegraphics[width=0.9\textwidth]{./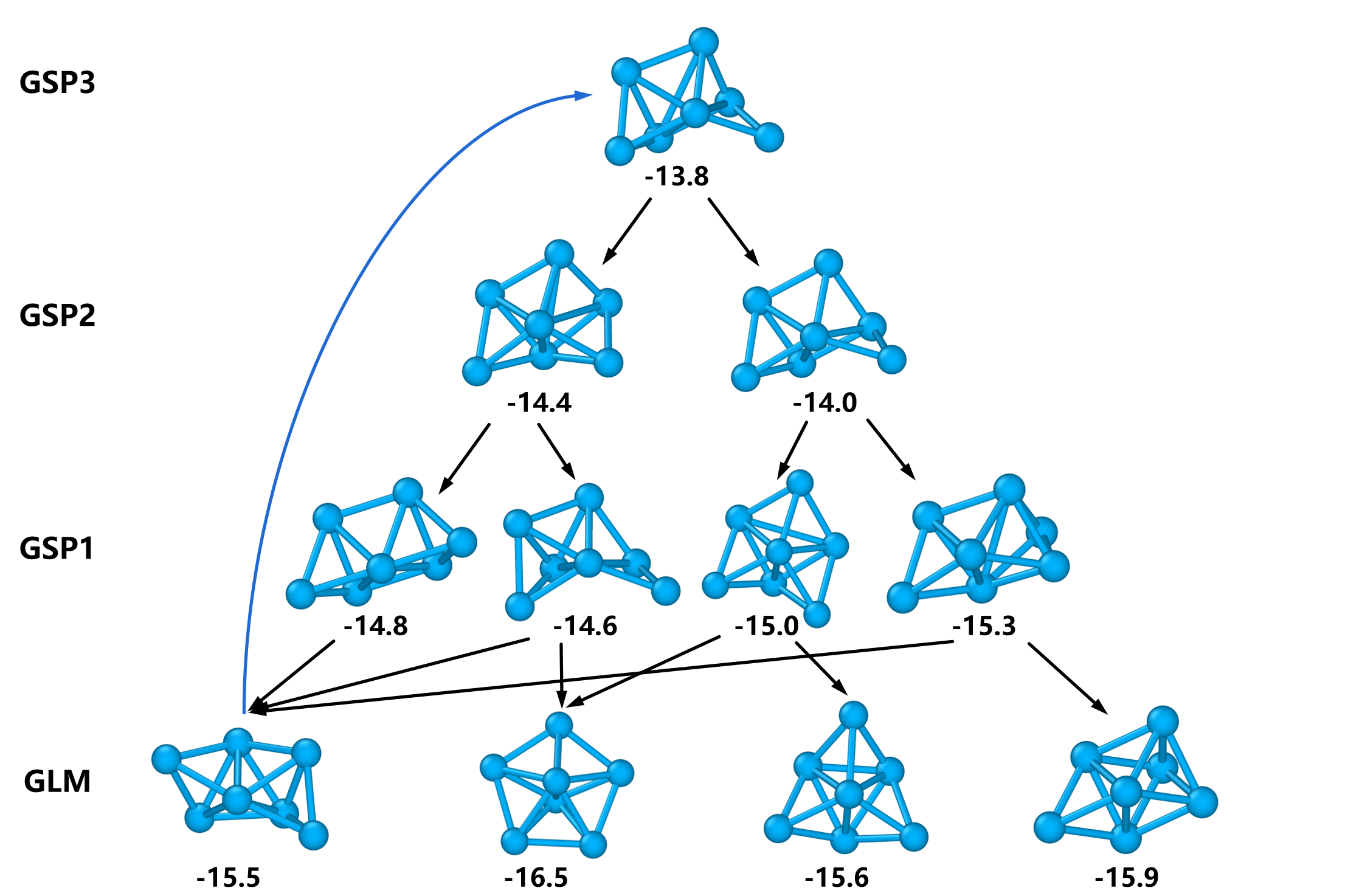}
	\caption{Solution landscape of Lennard-Jones clusters with seven particles (with energy for each state).}
	\label{fig:clusterlandscape}
\end{figure}

\section{Concluding remarks}
In this work, we develop the NPHiSD method for both constrained and unconstrained degenerate problems. To reduce the computational cost of computing the nullspace at each state, the search is performed on several segments where the nullspace is fixed on each segment. A sufficient and necessary condition for characterizing the segment that admits efficient upward directions is proved, which provides theoretical support for the algorithm. We then use the NPHiSD method to search for high-index saddle points and construct the solution landscape for field-based and particle-based systems, which confirms the efficiency, universality, and robustness of the NPHiSD method for degenerate problems. Apart from the considered models, the NPHiSD method may be further applied to other systems involving degenerate states, such as quantum systems, superconductivity, and skyrmions, and we will investigate these interesting topics in the near future.

% \section{Acknowledgments}
% Kai Jiang and Tiejun Zhou were supported by the National Natural Science Foundation of China (Grant Nos. 12171412), Science and Technology Innovation Program of Hunan Province (Grant No. 2024RC1052). Xiangcheng Zhen was partially supported by the National Natural Science Foundation of China (Grant No. 12301555), the National Key R&D Program of China (Grant No. 2023YFA1008903), and by the Taishan Scholars Program of Shandong Province (Grant No. tsqn202306083). We acknowledge the High-Performance Computing Platform of Xiangtan University for the partial support of this work.
    
\bibliographystyle{siamplain}

\end{document}